\def\R{\mathbb{R}}
\def\Z{\mathbb{Z}}
\newcommand{\spinc}{\mathrm{Spin}^c}
\newcommand{\CPb}{\overline{\mathbb{CP}}{}^{2}}
\newcommand{\CP}{{\mathbb{CP}}{}^{2}}
\newcommand{\SW}{{\rm SW}}
\def \x {\times}
\def \eu{{\text{e}}}
\newtheorem{theorem}{Theorem}[section]
\newtheorem{proposition}[theorem]{Proposition}
\newtheorem{lemma}[theorem]{Lemma}
\newtheorem{definition}[theorem]{Definition}
\newtheorem{remark}[theorem]{Remark}
\newtheorem{example}[theorem]{Example}
\begin{document}
\title[Broken Lefschetz fibrations and near-symplectic $4$-manifolds]
{Topology of broken Lefschetz fibrations \\ \vspace{0.03in} and near-symplectic $4$-manifolds}
\author[R. \.Inan\c{c} Baykur] {R.\, \.Inan\c{c} Baykur}
\address{Department of Mathematics, Columbia University, New York, NY 10027}
\email{baykur@math.columbia.edu}

\begin{abstract}
The topology of broken Lefschetz fibrations is studied by means of handle decompositions. We consider a slight generalization of round handles, and describe the handle diagrams for all that appear in dimension four. We establish simplified handlebody and monodromy representations for a certain subclass of broken Lefschetz fibrations/pencils, while showing that all near-symplectic closed $4$-manifolds can be supported by these \`a la Auroux, Donaldson, Katzarkov. Various constructions of broken Lefschetz fibrations and a generalization of the symplectic fiber sum operation to the near-symplectic setting are given. Extending the study of Lefschetz fibrations, we detect certain constraints on the symplectic fiber sum operation to result in a $4$-manifold with nontrivial Seiberg-Witten invariant, as well as the self-intersection numbers that sections of broken Lefschetz fibrations can acquire.
\end{abstract}
\maketitle

\setcounter{section}{-1}
\section{Introduction}
In the last decade, symplectic topology has been extensively used to explore the world of smooth $4$-manifolds, where Donaldson's work which provided a description of symplectic $4$-manifolds in terms of Lefschetz fibrations up to blow-ups played a remarkable role. Auroux, Donaldson and Katzarkov extended this result to establish a correspondence between the larger class of near-symplectic $4$-manifolds and an appropriate generalization of Lefschetz fibrations up to blow-ups \cite{ADK}. A detailed topological study of these fibrations, called \emph{broken Lefschetz fibrations} herein, and generalization of various ideas succeeded in the study of symplectic $4$-manifolds to this broader setting are the main themes of our article.

Our goal is to give handlebody descriptions of broken Lefschetz fibrations so to assist with identifying the total spaces of these fibrations, and with calculating smooth invariants. Although we only advert to the Seiberg-Witten invariant in this paper, two other invariants motivate our studies very much. One is the Heegaard-Floer invariant of Ozsv\'ath and Szab\'o which fits in a TQFT and makes use of handle decompositions. Recall that the Heegaard-Floer invariant of any symplectic $4$-manifold was shown to be nontrivial using a decomposition that arise from the underlying Lefschetz pencil structure after Donaldson \cite{OS:symplectic}. The second one is the recently introduced Lagrangian matching invariant of Perutz, associated to broken Lefschetz fibrations \cite{P1, P2}. This is generalized from the Donaldson-Smith invariant defined in the presence of a symplectic Lefschetz pencil \cite{DS}, which was shown to be equivalent to the Seiberg-Witten invariant for pencils of high degree by Usher \cite{U}. Perutz conjectured his invariant to be a smooth invariant and its calculation to be indeed independent of the broken Lefschetz fibration that is chosen on a fixed $4$-manifold. An affirmative answer to this conjecture in turn would require having ways of constructing broken Lefschetz fibrations, preferably as simple as possible in a sense that eases the calculations of the invariant. Our results in this article, which we briefly present below, are planned to be groundwork for future studies in these directions. 

In Section \ref{topology} we describe a generalization of round handle attachments and show that there are exactly two types: classical round handles and a twisted version of them. In dimension four, this is in agreement with the corresponding near-symplectic local models \cite{ADK, Ho2}. The handlebody diagrams for both untwisted and twisted round $1$-handles as well as round $2$-handles are given in Sections \ref{Round1handles} and \ref{Round2handles}. We introduce a subclass of broken Lefschetz fibrations, called \emph{simplified broken Lefschetz fibrations}, that can be effectively described in terms of handlebody diagrams and monodromy representations similar to those of Lefschetz fibrations (cf. Section \ref{simplifiedBLF}). The existence of simplified broken Lefschetz pencils on any $4$-manifold which does not have a negative-definite intersection form is proved in Theorem \ref{simplifiedBLFtheorem}.

In Section \ref{ConstructionsofBLF} we provide several constructions of broken Lefschetz fibrations. We give handlebody diagrams for near-symplectic broken Lefschetz fibrations and pencils on some standard $4$-manifolds, with untwisted and twisted round handles, with connected and disconnected fibers. Also a handlebody description of a broken Lefschetz fibration on the connected sum of the total spaces of two such fibrations is given. Recall that there is a different generalization of Lefschetz fibrations and pencils which are allowed to have nodal singularities and base points with nonstandard orientations, and decorated with the adjective ``achiral''. In Subsection \ref{achiralversusbroken} we give examples of $4$-manifolds which do not admit achiral Lefschetz fibrations or pencils but broken fibrations. We also show how one can turn achiral fibrations into broken ones after blowing-up the $4$-manifold. It follows from the work of Gay and Kirby who proved the existence of broken achiral Lefschetz fibrations on arbitrary closed smooth oriented $4$-manifolds that after blow-ups any such $4$-manifold admits a broken Lefschetz fibration \cite{GK}. Another operation we introduce in this section is the \textit{broken fiber sum} which generalizes the well-known symplectic fiber sum operation \cite{MW, Go} to the near-symplectic setting (Theorem \ref{NSbrokensum}), and whose effect on the Seiberg-Witten invariants is tractable. 

In Section \ref{applications}, we turn our attention to understanding how far certain facts regarding symplectic \linebreak $4$-manifolds and Lefschetz fibrations can be pushed into the near-symplectic geometry. Here we view $4$-manifolds with nontrivial Seiberg-Witten invariants (\SW) as an intermediate class that lies in between near-symplectic and symplectic $4$-manifolds. A question we address is the behavior of near-symplectic $4$-manifolds with nontrivial Seiberg-Witten invariants under the symplectic fiber sum operation. Even though symplectic fiber sum of two symplectic $4$-manifolds is again symplectic and thus has nontrivial \SW, we show that the symplectic fiber sum of near-symplectic $4$-manifolds with nontrivial \SW invariants can result in $4$-manifolds with trivial or nontrivial invariants depending on the choice of fibers that we sum along (Theorem \ref{trivialsum}). In a comparison with symplectic Lefschetz fibrations, we determine the constraints on the self-intersection numbers of sections of broken Lefschetz fibrations, possibly with total spaces which have nontrivial \SW invariants (Theorem \ref{sections}). Lastly, in Proposition \ref{BLFonknotsurgered} we construct near-symplectic broken Lefschetz fibrations on a family of $4$-manifolds which are not symplectic but have nontrivial Seiberg-Witten invariants; namely on knot surgered elliptic surfaces where the knots are nonfibered. 

\vspace{0.3cm}
\noindent \textit{Acknowledgments:} The results contained in this article are from the author's thesis studies. The author would like to thank his adviser Ron Fintushel for his continuous support during the course of this work, David Gay for very valuable e-mail correspondence, Firat Arikan, Cagri Karakurt and Lawrence Roberts for their interest in this study and for their support. Also thanks to Tim Perutz and Andras Stipsicz for commenting on a draft of this paper.

\medskip
\section{Background}
\subsection{Near-symplectic structures. } \label{NSstructures}

Let $\omega$ be a closed $2$-form on an oriented smooth $4$-manifold $X$ such that $\omega^2 \geq 0 $, and $Z_{\omega}$ be the set of points where $\omega \equiv 0$. Then $\omega$ is called a \emph{near-symplectic structure} on $X$ if $\omega^2 > 0$ on $X \setminus Z_{\omega}$ and if it satisfies the following transversality condition at every point $x$ in $Z_{\omega}$: if we use local coordinates on a neighborhood $U$ of $x$ to identify the map $\omega: U \rightarrow \Lambda^2 (T^* U)$ as a smooth map $\omega: \R^4 \to \R^6$, then the linearization $D \, \omega_x : \R^4 \to \R^6$ at $x$ should have rank three ---which is in fact independent of the chosen charts \cite{ADK}. In particular, $Z = Z_{\omega}$ is a smoothly embedded $1$-manifold in $X$. We then call $(X, \omega)$ a near-symplectic $4$-manifold, and $Z$ the \emph{zero locus of $\omega$}.

One of the motivations for studying near-symplectic structures has been the observation that any closed smooth oriented $4$-manifold $X$ with $b^+(X) > 0$ can be equipped with a near-symplectic form, which was known to gauge theory aficionados since early 1980s and a written proof of it was first given by Honda through the analysis of self-dual harmonic $2$-forms (\cite{Ho1}, also see \cite{ADK}). Thus the near-symplectic family is much broader than the symplectic family of $4$-manifolds. For instance, connected sums of symplectic $4$-manifolds can never be symplectic, due to the work of Taubes and the vanishing theorem for \SW invariants. However these manifolds would still have $b^+ >0$ and therefore are near-symplectic.

Using a generalized Moser type argument for harmonic self-dual $2$-forms, Honda showed in \cite{Ho2} that there are exactly two local models around each connected component of $Z_{\omega}$. To make this statement precise, let us consider the following local model: Take $\R^{4}$ with coordinates $(t, x_1,x_2, x_3)$ and consider the $2$-form $\Omega = dt \wedge dQ + *\, (dt \wedge dQ), $ where $Q(x_1, x_2, x_3)= x_1^{2} + x_{2}^{2} - x_{3}^{2}$ and $*$ is the standard Hodge star operator on $\Lambda^{2}\R^{4}$. Restrict $\Omega$ to $\R$ times the unit $3$-ball. Define two orientation preserving affine automorphisms of $\R^4$ by $\sigma_+(t, x_1, x_2, x_3)=( t + 2\pi, x_1, x_2, x_3)$ and $\sigma_- (t, x_1, x_2, x_3)=( t + 2\pi, -x_1, x_2, -x_3)$.  Since both maps preserve $\Omega$, they induce near-symplectic forms $\omega_{\pm}$ on the quotient spaces $N_{\pm} = \R \x D^3 / \sigma_{\pm}$. Honda shows that given any near-symplectic $(X, \omega)$ with zero locus $Z_{\omega}$, there is a Lipschitz self-homeomorphism $\phi$ on $X$ which is identity on $Z_{\omega}$, smooth outside of $Z_{\omega}$ and supported in an arbitrarily small neighborhood of $Z_{\omega}$, such that around each circle in $Z_{\omega}$, the form $\phi^*(\omega)$ agrees with one of the two local near-symplectic models $(N_{\pm}, \omega_{\pm})$. For our purposes, we can always replace the near-symplectic form $\omega$ with such a form $\phi^*(\omega)$. The zero circles which admit neighborhoods $(N_+, \omega_+)$ are called of \emph{even} type, and the others of \emph{odd} type.

\medskip
\subsection{Broken Lefschetz fibrations. }

Let $Z$ be an embedded smooth $1$-manifold, $C$ be a finite set of points in $X \setminus Z$, and $S$ be a compact orientable surface. A smooth map $f: X \to S$ is then called a \emph{broken Lefschetz fibration} if on $X \setminus Z$ it has local models of a Lefschetz fibration with $C$ the critical set, whereas at each $z \in Z$, there are coordinates $(t, x_1, x_2, x_3)$ around $z$ with $t$ a local coordinate on $Z$, in terms of which $f$ is given by $(t, x_1, x_2, x_3) \mapsto (t, x_1^2 + x_2^2 - x_3^2)$. We will call $Z$ the \textit{round singular locus} of $f$, and its image $f(Z)$ the \textit{round singular image}. A \emph{broken Lefschetz pencil} is defined similarly for $S = S^2$, by assuming that there is also a finite set of points $B$ in $X \setminus (Z \cup C)$ and the map $f$ has local models of a Lefschetz pencil instead with $C$ the critical set and $B$ the base locus. 

Broken Lefschetz fibrations and pencils were first introduced by Auroux, Donaldson, and Katzarkov in \cite{ADK} under the name ``singular Lefschetz fibrations'', where it was shown that they are to near-symplectic $4$-manifolds what Lefschetz fibrations are to symplectic $4$-manifolds:

\begin{theorem} [Auroux, Donaldson, Katzarkov \cite{ADK}] \label{ADKtheorem}
Suppose $\Gamma$ is a smooth $1$--dimensional submanifold of a compact oriented $4$--manifold $X$. Then the following two conditions are equivalent:
\begin{itemize}
\item There is a near-symplectic form $\omega$ on $X$,  with $Z_{\omega} = \Gamma$,
\item There is a broken Lefschetz pencil $f$ on $X$ which has round singularities along $\Gamma$, with the property  that there is a class $h\in H^{2}(X)$ such that $h(\Sigma)>0$ for every fiber component $\Sigma$. % of $f$
\end{itemize}
Moreover, the implications in each direction can be obtained in a compatible way. That is, given a near-symplectic form $\omega$, a corresponding broken Lefschetz pencil (BLP) can be obtained so that all the fibers are symplectic on the complement of the singular locus. Conversely, given a broken Lefschetz fibration (BLF) satisfying the indicated cohomological condition, one obtains a deformation class of near-symplectic forms which make the regular fibers symplectic.
\end{theorem}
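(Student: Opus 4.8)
The plan is to establish the two implications separately, treating the passage from a near-symplectic form to a broken Lefschetz pencil (the forward direction) as the substantial half, and the reconstruction of a near-symplectic form from a broken Lefschetz fibration (the reverse direction) by a Gompf--Thurston patching argument. Throughout, the Honda local model $\Omega = dt \wedge dQ + *(dt \wedge dQ)$ near $Z_\omega$ and the fold model $(t, x_1, x_2, x_3) \mapsto (t, x_1^2 + x_2^2 - x_3^2)$ must be matched, so that the signature of the quadratic form $Q = x_1^2 + x_2^2 - x_3^2$ dictates an indefinite fold rather than some other codimension-one critical behavior.

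For the forward direction I would extend Donaldson's approximately holomorphic theory to the degenerate setting. On $X \setminus Z$ the form $\omega$ is symplectic, so I fix an $\omega$-compatible almost-complex structure $J$ and a metric $g$, and record via Honda's normal form how $J$ degenerates as one approaches $Z$. After perturbing $[\omega]$ to a rational class up to scale, I take a Hermitian line bundle $L$ with a connection of curvature $-i\omega$ and study approximately holomorphic sections of $L^{\otimes k}$ for large $k$; the essential subtlety is that both the relevant uniform estimates and the very notion of holomorphicity have to be modified inside a shrinking neighborhood of $Z$ whose size is tuned to $k$. Applying Donaldson's quantitative transversality and his globalization of local perturbations, I would produce a pair of sections $(s_0, s_1)$ whose ratio $f = [s_0 : s_1]$ is a genuine Lefschetz pencil away from $Z$, with base locus $B = \{s_0 = s_1 = 0\}$ and isolated Lefschetz singularities where $df$ degenerates. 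The heart of the argument is then to show that along $Z$ the rank-three degeneracy of $D\omega_x$ forces $f$ into precisely the indefinite fold normal form above.

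For the reverse direction I would run a Gompf--Thurston construction adapted to the broken setting. Over $S \setminus f(Z)$ the map is an honest Lefschetz fibration, so the cohomological hypothesis---existence of $h \in H^2(X)$ with $h(\Sigma) > 0$ on every fiber component---lets me choose a closed $2$-form $\eta$ representing $h$ that is positive on all fibers; then $\eta + K f^*\sigma$ is symplectic for a large constant $K$ and an area form $\sigma$ on $S$, with the usual local adjustments at the Lefschetz critical points $C$. Near $Z$ I insert the model $\Omega$, which is near-symplectic with zero locus exactly $Z$, and interpolate between the two forms using a cutoff supported in a tubular neighborhood of $Z$, checking that $\omega^2 \geq 0$ persists and that the rank-three transversality condition at $Z$ survives the interpolation. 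The ``Moreover'' compatibility is then read off directly: in each direction the regular fibers are kept symplectic by design, and the freedom in $K$ and in the cutoff yields a single deformation class.

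The main obstacle is the forward direction, and specifically the analysis in the $k$-dependent neighborhood of $Z$. One must simultaneously retain enough approximate holomorphicity to run Donaldson's transversality scheme and enough control of the degeneration to guarantee that the singular set produced along $Z$ is exactly the fold $x_1^2 + x_2^2 - x_3^2$; reconciling these two requirements, and showing that the estimates do not break down as $\omega$ vanishes along $Z$, is where the real work lies.
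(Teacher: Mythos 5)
First, note that this paper does not actually prove Theorem \ref{ADKtheorem}: it is quoted from Auroux--Donaldson--Katzarkov \cite{ADK} and used as a black box throughout, so there is no internal proof to compare your attempt against. Judged against the actual argument in \cite{ADK}, your outline does identify the correct global strategy --- approximately holomorphic sections of $L^{\otimes k}$ for the passage from $\omega$ to a pencil, and a Gompf--Thurston patching argument using the class $h$ together with the local model $\Omega$ for the converse --- and the reverse direction as you describe it is essentially the right argument modulo the (routine but necessary) verification that positivity of $\omega^2$ and the rank-three transversality along $Z$ survive the interpolation.

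The forward direction, however, contains a genuine gap and also mislocates how the difficulty is resolved. You propose to run Donaldson's quantitative transversality scheme globally and then to \emph{deduce} that the resulting map acquires the indefinite fold normal form along $Z$ from the rank-three condition on $D\omega_x$. That cannot work as stated: on any neighborhood of $Z$ the form degenerates, the compatible almost-complex structure and metric degenerate with it, and the uniform bounds and transversality estimates that drive Donaldson's globalization fail precisely there; no genericity argument applied to sections produces the fold model for free. In \cite{ADK} the behavior near $Z$ is \emph{prescribed by hand}: one writes down explicit local sections in Honda's coordinates realizing the model $(t,x_1,x_2,x_3)\mapsto(t,x_1^2+x_2^2-x_3^2)$ compatibly with $\omega_\pm$, runs the approximately holomorphic perturbation scheme only on the complement of a neighborhood of $Z$ whose size is tuned to $k$, and then carries out a delicate matching of the two constructions on the overlap region. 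Your sketch flags exactly this step as ``where the real work lies'' but offers no mechanism for it, so what you have is a correct plan for a proof rather than a proof.
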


Blowing-up the base locus of a broken Lefschetz pencil we get a broken Lefschetz fibration. When the BLF/BLP supports a near-symplectic structure, these blow-ups/downs are understood to be made symplectically. If we have in hand a BLF over a Riemann surface $S$ that satisfies the cohomological condition in the statement of the theorem, then we can construct compatible near-symplectic forms with respect to which a chosen set of sections are symplectic \cite{ADK}. From now on we will refer to such a fibration $f$ on $X$ as \emph{a near-symplectic broken Lefschetz fibration}, and say that the pair $(X, f)$ is \emph{near-symplectic}. Implicit in this notation is that the near-symplectic form on $X$ is chosen from the unique deformation class of near-symplectic forms compatible with $f$ obtained via Theorem \ref{ADKtheorem}.

\begin{example} \label{StandardNSexample}  \label{StandardBLFexample}  \rm
Let $M^3$ be a closed $3$-manifold and $f: M \to S^1$ be a circle valued Morse function with only index $1$ and $2$ critical points. Then the $4$-manifold $X = S^1 \x M$ can be equipped with a near-symplectic structure. To see this, first note that by a theorem of Calabi there exists a metric $g$ on $M$ which makes $df$ harmonic. Parametrize the $S^1$ component by $t$, and consider the form $\omega = dt \wedge df + * (dt \wedge df)$, where the Hodge star operation is defined with respect to the product of the standard metric on $S^1$ and $g$ on $M$. Thus $\omega^2 \geq 0$ and $\omega$ vanishes precisely on $Z = S^1 \x Crit(f)$. Finally using local charts one can see that $\omega$ vanishes transversally at every point on $Z$, and also that all circles of $Z_{\omega}$ are of even type. If we consider a $3$-manifold $M_K$ obtained from $S^3$ after a $0$-surgery on an arbitrary knot $K$, then it comes with a circle valued Morse function. (Note that $\Z \cong H^1(M_K ; \Z) \cong [M_K, S^1]$.) Then $\omega$ defined as above yields a \textit{symplectic} form on $X= S^1 \x M_K$ if and only if $K$ is fibered so that $f$ can be assumed to have no critical points; i.e when $Z = \emptyset$. 

Now let $X= S^1 \x M_K$ for some nonfibered $K$. For simplicity, assume that the map $f: M_K \to S^1$ above is injective on its critical points. Then the preimage of any regular value of $f$ is a Seifert surface of $K$ capped off with a disk, i.e a closed orientable surface. While passing an index $k$ critical point ($k=1,2$), a $k$-handle is attached to get one Seifert surface from another. It follows that $f: M_K \to S^1$ is a `fibration-like' map, where the genera of fibers are increased or decreased by one at every critical point, depending on $k=1$ or $k=2$, respectively. When crossed with $S^1$, this yields a broken fibration $id \x f: X \to T^2$. The base torus $T^2 = S^1 \x S^1$ can be parameterized by $(t, s)$ where $t$ traces the outer circle factor and $s$ traces the base circle of $f$. The monodromy of this fibration is trivial in the $t$ direction and is prescribed by the knot monodromy in the $s$ direction. 
\end{example}

A BLF over a Riemann surface can be split into Lefschetz fibrations over surfaces with boundaries, and fibered cobordisms between them relating the surface fibrations over the boundary circles. Round singularities of a BLF are contained in these cobordisms, which herein will be called \emph{round cobordisms}. The local models around each round singular circle implies that these cobordisms are given by fiberwise handle attachments, all with the same index (either $1$ or $2$). Roughly speaking, such cobordisms with $1$-handle attachments increase the genus of a fiber component, or connect two different fiber components, whereas cobordisms with $2$-handle attachments either decrease the genus or disconnect a fiber component. We study these cobordisms more rigorously in the next section. In \cite{ADK} it was shown that for any given near-symplectic form $\omega$ on $X$, a compatible broken Lefschetz fibration  $f:X \# b \, \CPb \to S^2$, where $b$ is the number of base points, can be arranged in the following way: The base $S^2$ breaks into three pieces $D_l \cup A \cup D_h$, where $A$ is an annular neighborhood of the equator of the base $S^2$ which does not contain the image of any Lefschetz critical point, $D_l$ and $D_h$ are disks, so that (i) On $X_l = f^{-1}(D_l)$ and $X_h = f^{-1}(D_h)$ we have genuine Lefschetz fibrations; and (ii) The cobordism $W = f^{-1}(A)$ is given by only fiberwise $1$-handle attachments if one travels from the $X_l$ side to $X_h$ side. We call these kind of broken Lefschetz fibrations/pencils \emph{directed}, $X_l$ the \emph{lower side} and $X_h$ the \emph{higher side}.

\medskip
\subsection{Seiberg-Witten invariants. }
\label{subsec:SW}

We now review the basics of Seiberg-Witten invariant. The \emph{Seiberg-Witten invariant} of a smooth closed oriented $4$-manifold $X$\/ is an integer valued function which is defined on the set of Spin$^{c}$ structures on $X$. If we assume that $H_1(X;\Z)$ has no 2-torsion, then there is a one-to-one correspondence between the set of Spin$^{c}$ structures on $X$\/ and the set of characteristic elements of $H^2(X;\Z)$ as follows: To each Spin$^c$ structure $\mathfrak{s}$ on $X$\/ corresponds a bundle of positive spinors $W^+_{\mathfrak{s}}$ over $X$. Let $c(\mathfrak{s}) = c_1(W^+_{\mathfrak{s}}) \in H^2(X;\Z)$. Then each $c(\mathfrak{s})$ is a characteristic element of $H^2(X;\Z)$; \, i.e. $c_1(W^+_{\mathfrak{s}})$ reduces mod~2 to $w_2(X)$.

In this setup we can view the \emph{Seiberg-Witten invariant}\/ as an integer valued function
\begin{equation*}
\SW_X: \{ k\in H_2(X ; \Z) \mid {\rm PD}(k) \equiv w_2(X) \pmod2 \}
\longrightarrow \Z ,
\end{equation*}
where ${\rm PD}(k)$ denotes the Poincar\'{e} dual of $k$.  The Seiberg-Witten invariant $\SW_X$ is a diffeomorphism invariant when $b^+(X)>1$. Its overall sign depends on our choice of an orientation of \linebreak $H^0(X; \R)\otimes\det H_+^2(X;\R)\otimes \det H^1(X;\R).$

If $\SW_X(\beta)\neq 0$, then we call $\beta$\/ (and its Poincar\'{e} dual ${\rm PD}(\beta)\in H^2(X;\Z)$) a \emph{basic class}\/ of $X$. The canonical class $K_X=-c_1(X, \omega)$ of a symplectic $4$-manifold $(X, \omega)$\/ is a basic class when $b^+(X)>1$ with $\SW_{X}(K_X)=1$. It can be shown that, if $\beta$ is a basic class, then so is $-\beta$ with
\begin{equation*}
\SW_X(-\beta)=(-1)^{(\eu(X) +\sigma(X))/4}\,\SW_X(\beta) ,
\end{equation*}
where $\eu(X)$ is the Euler characteristic and $\sigma(X)$ is the signature of $X$.  We say that $X$\/ is of \emph{simple type}\/ if every basic class $\beta$\/ of $X$\/ satisfies
\begin{equation*}
\beta^2 = 2\eu(X) + 3\sigma(X).
\end{equation*}
It was shown in \cite{Ta2} that symplectic 4-manifolds with $b_2^+>1$ are of simple type. Let $\Sigma\subset X$\/ be an embedded surface of genus $g(\Sigma)>0$ with $[\Sigma]^2 \geq 0$. If $\beta$\/ is a basic class of $X$, we have the following \emph{adjunction inequality}\/ (cf.\ \cite{OS:adj ineq}):
\begin{equation} \label{adjineq}
-\chi(\Sigma) = 2g(\Sigma)-2 \geq [\Sigma]^2 + |\beta\cdot[\Sigma] |.
\end{equation}

When $b^+(X) = 1$, the Seiberg-Witten invariant $\SW_{X,H}(K)\in \mathbf{Z}$ is defined for
every positively oriented element $H\in H^2_+(X;\mathbf{R})$ and every element $A \in \mathcal{C}(X)$ such that $A\cdot H\neq 0$. We say that $H$ determines a \emph{chamber}. It is known that if $\SW_{X,H}( \cdot)\neq 0$ for some $H\in H^2_+(X;\mathbf{R})$, then $d(A)\ge 0$. The \emph{wall-crossing formula} prescribes the dependence of $\SW_{X,H}(A)$ on the choice of the chamber (that of $H$): if $H, H' \in H^2_+(X;\mathbf{R})$ and $A\in \mathcal{C}(X)$ satisfy $H\cdot H'>0$ and $d(A)\ge 0$, then
\begin{multline*}
\SW_{X,H'}(A)=\SW_{X,H}(A)\\
+
\begin{cases}
0&\text{if $A\cdot H$ and $A \cdot H'$ have the same sign,}\\
(-1)^{\frac{1}{2}d(A)}&\text{if $A\cdot H>0$ and $A\cdot H'<0$,}\\
(-1)^{1+\frac{1}{2}d(A)}&\text{if $A\cdot H<0$ and $A\cdot H'>0$}.
\end{cases}
\end{multline*}

In the presence of a near-symplectic structure $\omega$ on $X$ with $b^+(X)=1$, we will always consider the Seiberg-Witten invariant of $(X, \omega)$ computed in the chamber of $\omega$.

% ============================================================================================================
% ============================================================================================================
%\newpage
\medskip
\section{Topology of broken Lefschetz fibrations} \label{topology}

Handlebody diagrams of Lefschetz fibrations over $S^2$ are easy to depict and proved to be useful in the study of smooth $4$-manifolds. The reader is advised to turn to \cite{GS} for the details of this \textit{by now} classical theory and its several applications. In this section, we extend these techniques to the study of broken Lefschetz fibrations. For this purpose, we first discuss \emph{round handles} that arise naturally in the context of $4$-dimensional BLFs thoroughly.

In full generality, we are interested in attaching handles in a parameterized way as we now explain: Let $n \geq 4$ and regard $S^1 \x D^{n-1}$ as the total space of a $D^{n-1}$ bundle over $S^1$ defined by the projection map onto the circle component. Fibers can be thought as $(n-1)$-dimensional $k$-handles $D^k \x D^{n-1-k}$ which we would like to attach so that globally their attachments respect the bundle structure. For $0 < k < n-1$ this requires a choice of splitting the trivial $D^{n-1}$ bundle over $S^1$ into $D^k$ and $D^{n-1-k}$ bundles over $S^1$, which would descend from a splitting of the trivial $\R^{n-1}$ bundle over $S^1$ into rank $k$ and rank $n-1-k$ vector bundles over $S^1$. The latter are classified by homotopy classes of mappings from $S^1$ into the Grassmannian $\textbf{G}(n-1, k)$. Since $\pi_1(\textbf{G}(n-1, k)) = \Z_2$ for $0<k < n-1$ and $n -1 \geq 3$, there are two possible splittings up to isotopy. These can be realized using the two orientation preserving self-diffeomorphisms of $\R^k \x \R^{n-1-k}$, where one is the identity map, and the other one is defined by $(x_1, x_2, \ldots, x_{n-2}, x_{n-1}) \mapsto (-x_1, x_2, \ldots, x_{n-2}, -x_{n-1})$. Restricted to $D^k \x D^{n-1-k}$ each splitting specifies an $(n-1)$-dimensional $k$-handle structure on all $D^{n-1}$ fibers of the initial bundle $S^1 \x D^{n-1} \to S^1$ ---simply by specifying the core (and thus the cocore) on each fiber. The boundary restriction on the first component gives an $S^{k-1} \x D^{n-1-k}$ subbundle. The total space $L$ of the last bundle is a submanifold of $S^1 \x D^{n-1}$. Hence for $0 < k < n-1$, we define an \emph{$n$-dimensional general round $k$-handle} as a copy of $S^1 \x D^{n-1}$, attached to the boundary of an $n$-dimensional manifold $X$ by an embedding of $L \hookrightarrow \partial X$. 

The first comprehensive study of round handles is due to Asimov \cite{A}, and more on $4$-dimensional round $1$-handles can be found in \cite{GK}. However, both articles assume a restriction on the way these handles are attached: they only deal with round $k$-handles attached along $S^1 \x S^{k-1} \x D^{n-1-k}$, which in our definition corresponds to the trivial splitting of the $D^{n-1}$ bundle. We will refer to these as \emph{classical round handles}. Our second type of round handle attachment arises from the latter model where $L$ is a $\Z_2$ quotient of $S^1 \x S^{k-1} \x D^{n-1-k}$. We call the round handles attached in the classical way \emph{even} or \emph{untwisted round handles}, and the others \emph{odd} or \emph{twisted round handles} ---corresponding in dimension four to the even and odd local models around circle components of the zero loci of near-symplectic forms as discussed in Subsection \ref{NSstructures}. One then easily sees that:

\begin{lemma}
For $n \geq 4$ and $0 < k < n-1$, a \emph{general} $n$-dimensional round $k$-handle attachment is given by a $k$-handle attachment followed by a $(k+1)$-handle attachment that goes over the $k$-handle geometrically twice, algebraically zero times if it is an untwisted round handle and twice if it is a twisted round handle.
\end{lemma}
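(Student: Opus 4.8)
The plan is to manufacture the round $k$-handle out of two ordinary handles by running the standard Morse function on the base circle, and then to read off the attaching data of the resulting $(k+1)$-handle. First I would give $S^1$ a Morse function with one minimum and one maximum, so that $S^1 = h^0 \cup h^1$ is an ordinary $0$-handle together with a $1$-handle glued along the two points of $\partial h^0 = S^0$. Crossing this with the fiberwise $(n-1)$-dimensional $k$-handle $D^k \x D^{n-1-k}$, and using that the product of an $a$-handle with a $b$-handle is an $(a+b)$-handle, I obtain the round handle as $H_k \cup H_{k+1}$, where $H_k = h^0 \x (D^k \x D^{n-1-k})$ is an $n$-dimensional $k$-handle and $H_{k+1} = h^1 \x (D^k \x D^{n-1-k})$ is an $n$-dimensional $(k+1)$-handle. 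Since the minimum precedes the maximum, $H_k$ is attached first; this already yields the first assertion. The attaching region $L$ of the round handle restricts over $h^0$ to the attaching region of $H_k$ and over $h^1$ to part of that of $H_{k+1}$.

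Next I would locate the attaching sphere $S^k$ of $H_{k+1}$ in $\partial(X \cup H_k)$ and intersect it with the belt sphere $S^{n-1-k}$ of $H_k$. Writing the core disk of $H_{k+1}$ as $h^1 \x (D^k \x \{0\})$, its boundary is the union of the side piece $h^1 \x (S^{k-1}\x\{0\})$, which lies in $L$ and attaches directly to $\partial X$, with the two $D^k$-sheets $\partial h^1 \x (D^k \x \{0\})$ lying over the two points at which $h^1$ meets $h^0$. The belt sphere of $H_k$ sits at the origin of the $D^k$-core factor, and each sheet crosses it transversally in a single point; hence the geometric intersection number is exactly two, independently of any twisting. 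This is the ``goes over the $k$-handle geometrically twice'' part.

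The crux is the signed count, where the orientations must be tracked with care, since a naive tally of the two sheets would wrongly suggest $\pm 2$ in every case. At the two intersection points there are three independent sources of a relative sign: the two $D^k$-sheets are the opposite ends of $\partial(h^1 \x D^k)$; the two caps of the belt sphere are the opposite ends of $\partial(h^0 \x D^{n-1-k})$; and the ambient orientation of $\partial(X\cup H_k)$, obtained by contracting the orientation of $H_k$ against the outward normal, points oppositely at the two ends. In the untwisted case these three reversals multiply to a single reversal, so the two local intersection signs are opposite and the algebraic count is $(+1)+(-1)=0$. I expect this bookkeeping to be the main obstacle: the whole content of the zero in the untwisted case is that all three reversals are correctly accounted for.

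Finally I would feed in the twist. The twisted splitting is realized by regluing the trivialized bundle over the interval by $(x_1,\dots,x_{n-1}) \mapsto (-x_1,x_2,\dots,x_{n-2},-x_{n-1})$, which I would arrange to occur in the interior of the arc $h^1$, leaving $H_k$ untwisted. This map negates exactly one coordinate of the rank-$k$ core block and one of the rank-$(n-1-k)$ cocore block. The cocore negation moves nothing relevant: the attaching sphere of $H_{k+1}$ lies in the zero-cocore slice, and the belt sphere and the ambient orientation are computed inside the untwisted $H_k$. The core negation, however, reverses the induced orientation of exactly one of the two $D^k$-sheets, cancelling one of the three reversals above. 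The two local signs then agree, so the signed count becomes $(+1)+(+1)=\pm 2$. Comparing the two computations gives the stated dichotomy.
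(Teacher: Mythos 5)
Your proposal is correct and follows the same route the paper takes: decomposing the base circle into a $0$-handle and a $1$-handle and crossing with the fiberwise $k$-handle, exactly as the paper does explicitly for round $1$- and $2$-handles in Sections 2.1--2.2 (the lemma itself is left as ``one then easily sees''). Your careful sign bookkeeping for the algebraic count, in particular isolating the single orientation reversal contributed by the core-block negation in the twisted case, fills in the detail the paper omits.
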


The ones which interest us in this paper are $4$-dimensional $1$- and $2$- round handles. From the very definition of broken Lefschetz fibrations we conclude that:

\begin{lemma}
A round cobordism with a connected round locus and embedded round image in a broken Lefschetz fibration is given either by a twisted or untwisted round $1$-handle (dually round $2$-handle) attachment.
\end{lemma}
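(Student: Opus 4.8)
The plan is to unwind the definitions and show that the two local models in the definition of a broken Lefschetz fibration are precisely the boundary restrictions of the two round $1$-handle splittings described just above. Concretely, a round cobordism with connected round locus $Z \cong S^1$ and embedded round image $f(Z)$ is, by the very definition of a BLF, the preimage $W = f^{-1}(A)$ of an annular neighborhood $A$ of the embedded arc/circle $f(Z)$ in the base, on which $f$ has the standard form $(t, x_1, x_2, x_3) \mapsto (t, x_1^2 + x_2^2 - x_3^2)$ with $t$ a coordinate on $Z$. First I would restrict attention to a single fiber direction: fixing the $t$-coordinate, the model $(x_1, x_2, x_3) \mapsto x_1^2 + x_2^2 - x_3^2$ is exactly the elementary Morse cobordism that attaches a $3$-dimensional $1$-handle (reading from the level set below the singular value to the one above), whose core is the $x_3$-axis and whose ascending region is governed by the index-$1$ quadratic form. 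So fiberwise $W$ is nothing but a $1$-handle attachment, parameterized smoothly over $t \in S^1$ as $Z$ is traversed.

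Next I would argue that this fiberwise family of $1$-handles, glued up as $t$ runs around the circle $Z$, is by construction a round $1$-handle attachment in the general sense defined above (with $n=4$, $k=1$): the $D^{n-1} = D^3$ bundle over $S^1$ is the normal data of the fiber slices, and the core directions ($D^k = D^1$ along $x_3$) together with the cocore directions ($D^{n-1-k} = D^2$ in the $x_1, x_2$ plane) specify exactly the splitting of the trivial $\R^3$ bundle over $S^1$ into a line bundle plus a plane bundle. By the discussion preceding the lemma, $\pi_1(\mathbf{G}(3,1)) = \Z_2$ forces this splitting to be one of exactly two isotopy classes, realized by the identity gluing or by the orientation-preserving involution $(x_1, x_2, x_3) \mapsto (-x_1, x_2, -x_3)$. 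These are precisely the untwisted and twisted round $1$-handles. Dually, reading the same cobordism $W$ from the $X_h$ side to the $X_l$ side reverses the Morse function, turning each index-$1$ critical point into an index-$2$ one, so the same cobordism is a round $2$-handle; this is the parenthetical claim.

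The one point that needs care --- and which I expect to be the main (though modest) obstacle --- is verifying that the monodromy gluing of the $D^3$ bundle as $t$ traverses $Z$ must land in one of these two classes, rather than producing some third possibility. This is where I would invoke the hypotheses explicitly: the round locus $Z$ is connected (so there is a single closed loop to travel around, and hence a single well-defined isotopy class of splitting) and $f(Z)$ is embedded (so the annular neighborhood $A$ is genuinely an annulus and $W$ fibers cleanly over it without self-overlaps of the singular image). Under these hypotheses the gluing data is exactly a homotopy class of maps $S^1 \to \mathbf{G}(3,1)$, and since $\pi_1(\mathbf{G}(3,1)) = \Z_2$ the classification into untwisted versus twisted is forced. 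I would then simply quote the earlier lemma to translate the round $1$-handle into its handle-attachment description, completing the proof.
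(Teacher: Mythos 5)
Your proposal is correct and is essentially the argument the paper intends: the paper gives no explicit proof, asserting the lemma follows ``from the very definition'' of a BLF together with the preceding classification of general round $k$-handle attachments via $\pi_1(\mathbf{G}(n-1,k)) \cong \Z_2$, and your write-up simply unwinds that --- fiberwise the local model $(t,x)\mapsto (t, x_1^2+x_2^2-x_3^2)$ is an elementary index-$1$ (dually index-$2$) Morse cobordism, and parameterizing over the connected circle $Z$ reduces the gluing data to a loop in $\mathbf{G}(3,1)$, forcing exactly the untwisted and twisted cases. No gap; this matches the paper's approach.
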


\noindent After a small perturbation of the BLF we can decompose any round cobordism into round cobordisms with connected round loci, so this lemma indeed tells that any round cobordism appearing in a BLF can be realized as a sequence of round handle attachments. Conversely, if two surface fibrations are related through a round handle attachment, the fibrations on the two ends of such a cobordism $W$ uniquely extend to a broken fibration on $W$ over $S^1 \x I$, with only one round singularity given by the centers of the cores of fiberwise attached $1$-handles (or dually by the centers of the cores of fiberwise attached $2$-handles), which make up the round handle.

Let us describe the attachments in the twisted case more explicitly. The attachment of a twisted round $1$-handle is made along the boundary of a $D^1$ subbundle that traces a Mobius band. This is topologically the $D^2$ neighborhood of a circle in $S^1 \x D^3$ covering the base $S^1$ twice. If we restrict our attention to the $D^1$ bundle (parameterized by $x_1$) over $S^1$, both untwisted	and twisted round $1$-handles can be seen to have attaching regions given by the restriction of this bundle to its boundary (which gives a bi-section of the $D^1$ bundle) times the complementary $D^2$ bundle. Then the twisted and untwisted cases correspond to this bi-section having one component or two components, respectively. Similarly, a twisted round $2$-handle is attached along a collar neighborhood of a Klein Bottle, whereas in the untwisted case we would be gluing along a collar neighborhood of a torus.

\begin{remark} \rm
Recall that a \emph{fold type singularity} of a map from an $n$-dimensional manifold to a surface is locally modeled by $(x, t) \mapsto (x_1^2 + \ldots + x_k^2 - x_{k+1}^2 - \ldots - x_{n-1}^2, t)$ for some $1 \leq k \leq n-1$, where $(x, t) \in \R^{n-1} \x \R$. Since an $n$-dimensional general round $k$-handle naturally admits a map with fold singularities parameterized along a circle, the above lemma can be generalized to maps with such fold singularities in any dimension.
\end{remark}

\medskip
\subsection{Round $1$-handles. } \label{Round1handles}

Regarding the circle factor of an untwisted round $1$-handle $S^1 \x D^1 \x D^2$ as the union of a $0$-handle and a $1$-handle, we can express an untwisted round $1$-handle as the union of a $4$-dimensional $1$-handle $H_1$ and a $2$-handle $H_2$. This handle decomposition can be seen simply by taking the product of the annulus $S^1 \x D^1$ with $D^2$ so to conclude that $H_2$ goes over $H_1$ geometrically twice but algebraically zero times. In the same way, we can realize a twisted round $1$-handle as the union of a $1$-handle $H_1$ and a $2$-handle $H_2$, too. However the underlying splitting this time implies that $H_2$ goes over $H_1$ both geometrically and algebraically twice.

We are ready to discuss the corresponding Kirby diagrams. Recall that our aim is to study the round handle attachments to boundaries of (broken) Lefschetz fibrations. Let $F$ denote the $2$-handle corresponding to the regular fiber. Both in untwisted and twisted cases, the $2$-handle $H_2$ of the round $1$-handle links $F$ geometrically and algebraically twice and can attain any framing $k$. Both `ends' of the $H_2$ are allowed to go through any one of the $1$-handles of the fiber before completely wrapping around $F$ \textbf{once}. In addition, these two ends might twist around each other as in Figure \ref{Round1attachments}. (Caution! The ``twisting'' discussed in \cite{ADK} is not this one; what corresponds to it is the framing $k$.) The difference between untwisted and twisted cases only show-up in the way $H_2$ goes through $H_1$ as demonstrated in the Figure \ref{Round1attachments}.

\begin{figure} \label{Round1attachments}
\begin{center}
\includegraphics{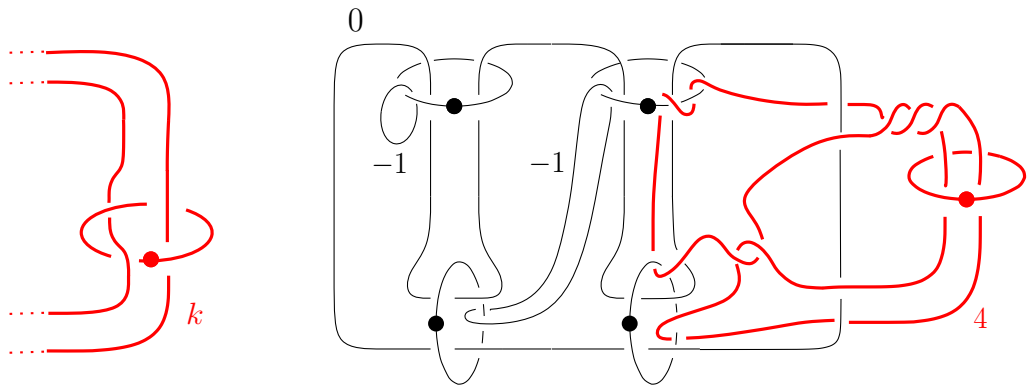}
\caption{\small An arbitrary twisted round $1$-handle (left), and an untwisted round $1$-handle attachment to a genus two Lefschetz fibration over a disk (right).}
\end{center}
\end{figure}

\medskip
\subsection{Round $2$-handles. } \label{Round2handles}
The handle decomposition of round $2$-handles is analogous to that of round $1$-handles. Regarding the circle factor of an untwisted round $2$-handle $S^1 \x D^2 \x D^1$ as the union of a $0$-handle and a $1$-handle, this time we can express an untwisted round $2$-handle as the union of a $4$-dimensional $2$-handle $H'_2$ and a $3$-handle $H'_3$. For a twisted round $2$-handle we get a similar decomposition. However the splittings once again imply the difference: the $3$-handle goes over the $2$-handle geometrically twice and algebraically zero times in the untwisted case, and both geometrically and algebraically twice in the twisted case. One can conclude this from the previous subsection as well, since a round $2$-handle turned upside down is a round $1$-handle.

We are now ready to discuss the corresponding Kirby diagrams for attaching round $2$-handles to Lefschetz fibered $4$-manifolds with boundary. The round $2$-handle attachment to a surface fibration over a circle that bounds a Lefschetz fibration is realized as a fiberwise $2$-handle attachment. The attaching circle of the $2$-handle $H'_2$ of a round $2$-handle is then a simple closed curve $\gamma$ on a regular fiber, which is preserved under the monodromy of this fibration up to isotopy. Since this attachment comes from a fiberwise handle attachment, $H'_2$ should have fiber framing zero. As usual, we do not draw the $3$-handle $H'_3$ of the round $2$-handle, which is forced to be attached in a way that it completes the fiberwise $2$-handle attachments. The difference between the untwisted and twisted cases is then somewhat implicit; it is distinguished by the two possible ways that the curve $\gamma$ is mapped onto itself under a self-diffeomorphism of the fiber determined by the monodromy. If $\gamma$ is mapped onto itself with the same orientation, then we have an untwisted round $2$-handle, and a twisted round $2$-handle if the orientation of $\gamma$ is reversed. The reader might want to refer to the relevant monodromy discussion after the proof of Theorem \ref{simplifiedBLFtheorem}.

The upshot of using round $2$-handles is that one can depict any Lefschetz fibration over a disk together with a round $2$-handle attachment via Kirby diagrams explicitly. One first draws the Lefschetz $2$-handles following the monodromy data on a regular diagram of $D^2 \x \Sigma_g$ (where $g$ is the genus of the fibration) with fiber framings $-1$, then attaches $H'_2$ with fiber framing $0$ and includes an extra $3$-handle. We draw the Kirby diagram with standard $1$-handles so to match the fiber framings with the blackboard framings, which can then carefully be changed to the dotted notation if needed. Importantly, it suffices to study only these type of diagrams when dealing with BLFs on near-symplectic $4$-manifolds, as we will prove in the next subsection.

\begin{figure} [ht] \label{Round2attachments}
\begin{center}
\includegraphics{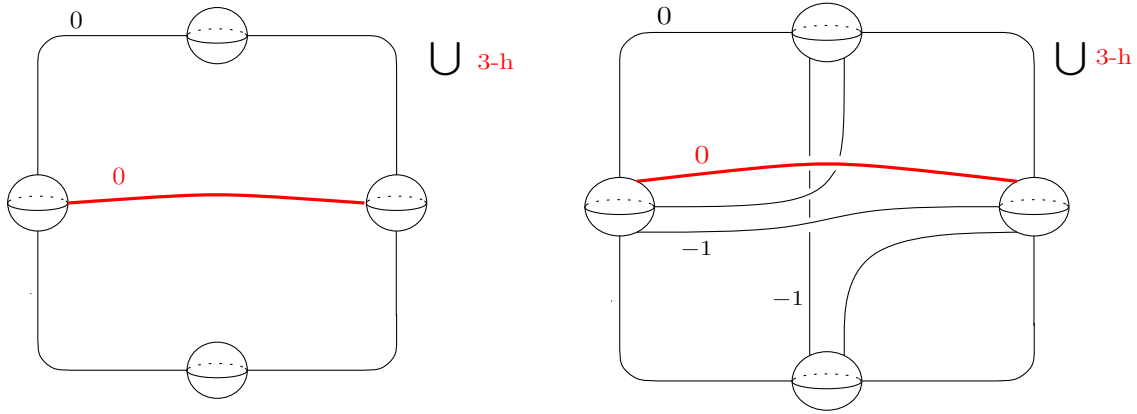}
\caption{\small Left: an untwisted round $2$-handle attachment to $D^2 \x T^2$. Right: a twisted round $2$-handle attachment to an elliptic Lefschetz fibration over a disk with two Lefschetz singularities. Red handles make up the round $2$-handle.}
\end{center}
\end{figure}

To illustrate what we have stated above, let us look at the two simple examples in Figure \ref{Round2attachments}. In the first example the round $2$-handle is attached to a trivial fibration, so $\gamma$ is certainly mapped onto itself with the same orientation. Therefore it is an untwisted round $2$-handle. For the second one, we can express the self-diffeomorphism of the $2$-torus fiber induced by the monodromy $\mu$ by the matrix:
\vspace{-0.3in}
\begin{center}
\begin{math} \bordermatrix{& & &\cr
&-1 &2\cr
&0 &-1\cr}
\end{math}
\end{center}
\vspace{0.1in}
\noindent and the curve $\gamma$ by the matrix $[1 \, \, 0]^{\, T}$. Thus $\mu$ maps $\gamma$ to $- \gamma$, and this yields a twisted round $2$-handle attachment. Both of these examples will be revisited later.

\newpage
\subsection{Simplified broken Lefschetz fibrations} \label{simplifiedBLF}

\begin{definition}
A \emph{simplified broken Lefschetz fibration} on a closed $4$-manifold $X$ is a broken Lefschetz fibration over $S^2$ with only one round singularity and with all critical points on the higher side. A \emph{simplified broken Lefschetz pencil} is a broken Lefschetz pencil that yields a simplified broken Lefschetz fibration on $\tilde{X}$ obtained by blowing-up the base points of the pencil on $X$.
\end{definition}

\noindent Since the total space of the fibration is connected, the ``higher side'' always consists of connected fibers. The fibers on the lower side have lower genus whenever the fibers are connected, while in general the term refers to the direction of the fibration. Simplified BLFs can be depicted efficiently using the handlebody diagrams described in Subsection \ref{Round2handles}. Examples are given in the next subsection.

We will need the following lemma:

\begin{lemma} \label{Push}
If $X$ admits a directed broken Lefschetz fibration over $S^2$, then it can be replaced by a new broken Lefschetz fibration over $S^2$, where all Lefschetz singularities are contained in the higher side.
\end{lemma}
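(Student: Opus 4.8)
The plan is to move each Lefschetz critical value lying over the lower side across the round singular image, one round circle at a time, until it reaches the higher side. The fact that makes this possible is that the round cobordism from the lower to the higher side consists of fiberwise $1$-handle attachments, so the lower-genus fiber \emph{includes} into the higher-genus fiber and every vanishing cycle survives; the opposite direction would cross a fiberwise $2$-handle and could kill the curve, which is precisely why one only pushes upward.

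First I would set up the local picture. Using the perturbation noted after the preceding lemma, I may assume that each round circle in the annular region $A$ is embedded and that the round cobordism over $A$ is a union of round $1$-handles, each realized by a fiberwise $1$-handle attachment taking a fiber $\Sigma_g$ on its lower side to a fiber $\Sigma_{g+1}$ on its higher side (or joining two components into one). Since $A$ carries no Lefschetz critical values, it suffices to treat a single Lefschetz critical value $q$ lying just below one round circle and to exhibit an equivalent broken fibration in which $q$ has been pushed just above that circle. Iterating over all lower-side Lefschetz points and over all the round circles each of them must cross then lands every Lefschetz singularity in $D_h$.

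Next I would carry out the local move. Let $c \subset \Sigma_g$ be the vanishing cycle of $q$. The genus-increasing round cobordism attaches its fiberwise tube along a pair of disks $S^0 \x D^2 \subset \Sigma_g$; since a simple closed curve can always be isotoped off two disks, I isotope $c$ to be disjoint from them, so that $c$ persists as an embedded curve $c' \subset \Sigma_{g+1}$ under the inclusion of $\Sigma_g \setminus (S^0 \x \mathrm{int}\, D^2)$ into $\Sigma_{g+1}$. I then claim that, over a small disk $D$ in the base straddling the round image, the broken fibration carrying the round $1$-handle with $q$ below is equivalent rel $\partial D$ to the one carrying the same round $1$-handle together with a Lefschetz critical value \emph{above} whose vanishing cycle is $c'$. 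This I would verify with the Kirby diagrams of Sections \ref{Round1handles} and \ref{Round2handles}: after the isotopy the Lefschetz $2$-handle attached along $c$ with fiber framing $-1$ no longer runs over the $1$-handle $H_1$ of the round handle, so it can be reordered past $H_1$ and $H_2$, and the reordered picture is exactly the Lefschetz $2$-handle attached along $c'$ on the higher side.

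Finally I would globalize. Because each local move is supported in a disk $D$ and agrees with the original fibration near $\partial D$, the modified fibration glues back to a broken Lefschetz fibration on all of $X$ with one fewer Lefschetz critical value below the round circle in question; the repositioning of critical values in the base, and any braiding needed to route the pushing path around the remaining critical values, is handled by the standard Hurwitz moves. I expect the main obstacle to be the local move itself: one must check that transporting $c$ across the round $1$-handle produces no extra vanishing cycle and no framing change, i.e.\ that once $c$ is isotoped off the feet the Lefschetz $2$-handle genuinely commutes with $H_1$ and $H_2$ rather than sliding over them. The chosen direction is what resolves this, since lower-to-higher is the inclusion direction $\Sigma_g \hookrightarrow \Sigma_{g+1}$ and the vanishing cycle is never forced to bound.
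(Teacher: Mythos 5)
Your proposal is correct and follows essentially the same route as the paper: both arguments isotope the vanishing cycles off the attaching region (the feet/bi-section) of each round $1$-handle, observe that the Lefschetz $2$-handles then commute past the handles $H_1$ and $H_2$ of the round handle with their fiber framings unchanged, and iterate over the round circles until all Lefschetz handles are attached on the higher side. The only difference is bookkeeping --- you push one critical value at a time via a local move, while the paper reorders all the Lefschetz $2$-handles of the lower side past each round cobordism at once --- which does not change the substance of the argument.
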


\begin{proof} To begin with, we can perturb the directed fibration so to guarantee that it is injective on the circles of the round locus. Thus the fibration can be split into a Lefschetz fibration over a disk (the lower side), to which we consecutively attach round $1$-handles, and then we close the fibration by another Lefschetz fibration over a disk (the higher side).

To simplify our discussion, for the time being assume that the fibers are all connected, so there is the lower genus side $X_l$ with regular fiber $F_l$, the round handle cobordism $W$, and the higher genus side $X_h$ with regular fiber $F_h$. Let the genus of the regular fibers in the lower side be $g$. The standard handlebody decomposition of $X_l$ consists of a $0$-handle, $2 g$ $1$-handles and some $2$-handles one of which corresponds to the fiber, and the rest to the Lefschetz handles in $X_l$. By our assumption, $W$ is composed of ordered round $1$-handle cobordisms $W_1 \cup W_2 \cup \cdots \cup W_k$, where $k$ is the number of circle components in the round locus. Let us denote the lower side boundary of $W_i$ by $\partial_- W_i$ and the higher side by $\partial_+ W_i$.

Consider $X_l \cup W_1$, which is obtained by adding a round $1$-handle $R_1$ composed of a $1$-handle $H_1$ and a $2$-handle $H_2$. The $\partial (X_l \cup W_1) = \partial_+ W_1 = \partial_- W_2$ is the total space of a genus $g+1$ surface bundle over a circle. We can make sure that the vanishing cycles of the Lefschetz $2$-handles in $X_l$ sit on the fibers of the genus $g$ fibration on $\partial X_l$. Moreover, we can assume that the bi-section which is the attaching region of $R_1$ misses these vanishing cycles. This means that $H_1$ and $H_2$ do not link with any one of the Lefschetz $2$-handles in $X_l$ but only with the $2$-handle corresponding to the fiber and possibly with some of the $1$-handles corresponding to the genera of the fiber. We can replace the handlebody prescribed by the BLF on $X_l \cup W_1$ by another one where first $H_1$ and $H_2$ are attached to the standard diagram of $D^2 \x F_l$, and the Lefschetz $2$-handles are attached afterwards. Having modified the diagram this way, now we can assume that the Lefschetz $2$-handles are attached to $\partial (X_l \cup W_1)$, which can be pulled to $\partial_{-} W_2$ via the fiber preserving diffeomorphism between $\partial_{+} W_1$ and $\partial_{-} W_2$. The fiber framings of these $2$-handles remain the same, and therefore they are still Lefschetz.

Inductively, one slides the Lefschetz $2$-handles so to have them attached to $\partial (X_l \cup W_1 \cup W_2 \cup \cdots \cup W_k) = \partial (X_l \cup W) = - \partial X_h$. Higher side $X_h$ together with these $2$-handles is equipped with a new Lefschetz fibration of genus $g+k$ (which is the same as the genus of $F_h$) over a disk. Hence we obtain a new handlebody decomposition which describes a new BLF on $X$, with all the Lefschetz singularities contained in the new higher side. The reader can verify that a similar line of arguments work when $X_l$ has disconnected fibers.
\end{proof}

Given a near-symplectic form on a closed $4$-manifold $X$, Perutz \cite{P3} and Taubes \cite{T3} independently showed that one can obtain a cohomologous near-symplectic form on $X$ with a connected round locus. The meat of the next theorem is this observation and the Theorem \ref{ADKtheorem}.

\begin{theorem} \label{simplifiedBLFtheorem}
On any closed near-symplectic 4-manifold $(X, \omega)$, possibly after replacing $\omega$ with a cohomologous near-symplectic form $\omega'$, one can find a compatible simplified broken Lefschetz pencil.
\end{theorem}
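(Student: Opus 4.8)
The plan is to assemble the statement from three ingredients already in hand: the Auroux--Donaldson--Katzarkov correspondence (Theorem \ref{ADKtheorem}), the Perutz--Taubes result that allows one to arrange a connected round locus, and the handle-pushing Lemma \ref{Push}. The key observation is that the definition of a \emph{simplified} broken Lefschetz pencil makes only two demands beyond being a broken Lefschetz pencil: that the round singular locus be connected (a single round circle), and that all Lefschetz critical points lie on the higher side. Each of these can be achieved by one of the cited results.

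First I would start from the given near-symplectic form $\omega$ on $X$. By the result of Perutz \cite{P3} and Taubes \cite{T3} quoted immediately before the theorem, I may replace $\omega$ with a cohomologous near-symplectic form $\omega'$ whose zero locus $Z_{\omega'}$ is a single connected circle. Since $\omega'$ is cohomologous to $\omega$ and still near-symplectic, this does not leave the deformation class we care about. Next I would invoke Theorem \ref{ADKtheorem} applied to $(X, \omega')$ with $\Gamma = Z_{\omega'}$: this yields a compatible broken Lefschetz pencil $f$ on $X$ with round singularities exactly along the connected circle $\Gamma$, and with a cohomology class $h$ positive on every fiber component. Blowing up the base points produces a broken Lefschetz fibration $\tilde f$ on $\tilde X = X \# b\,\CPb$ over $S^2$, again with a single round circle. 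The cohomological positivity condition supplied by Theorem \ref{ADKtheorem} is precisely what guarantees that $\tilde f$ can be taken to be \emph{directed} in the sense discussed after that theorem, so that the round cobordism is crossed by fiberwise $1$-handle attachments travelling from the lower to the higher side.

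With $\tilde f$ directed and carrying a single round circle, I would then apply Lemma \ref{Push} to move all Lefschetz critical points to the higher side. This produces a new broken Lefschetz fibration on $\tilde X$ over $S^2$ with exactly one round singularity and all Lefschetz singularities on the higher side --- which is exactly the definition of a simplified broken Lefschetz fibration. Since this fibration was obtained from the pencil $f$ on $X$ by blowing up the base points, $f$ is, by definition, a simplified broken Lefschetz pencil on $X$, compatible with $\omega'$. One must keep track that the handle slides performed in Lemma \ref{Push} do not disturb the connectedness of the round locus nor the directedness used to set them up; since those slides only move Lefschetz $2$-handles across the round cobordisms and preserve fiber framings, the single round circle and the fiberwise $1$-handle structure survive.

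The main obstacle I anticipate is verifying the compatibility and directedness bookkeeping rather than any single hard geometric step. Specifically, one has to check that the broken Lefschetz pencil produced by Theorem \ref{ADKtheorem} from $(X,\omega')$ genuinely inherits a directed structure with the round cobordism consisting of $1$-handle attachments oriented from lower to higher side --- this is where the cohomological condition $h(\Sigma) > 0$ must be used to orient the fibration consistently so that the genus-increasing side is the higher side. A secondary subtlety is ensuring that after passing from the pencil on $X$ to the fibration on $\tilde X$ and back, the near-symplectic form $\omega'$ remains the form compatible with the resulting simplified structure in the sense of the convention fixed after Theorem \ref{ADKtheorem}; this is a matter of confirming that the blow-ups and handle manipulations stay within the unique compatible deformation class, which the symplectic nature of the blow-ups guarantees.
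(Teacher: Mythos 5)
Your proposal follows essentially the same route as the paper: replace $\omega$ by a cohomologous form with connected zero locus (Perutz--Taubes), apply Theorem \ref{ADKtheorem} to get a compatible pencil with a single round circle, blow up the base points, and invoke Lemma \ref{Push} to push all Lefschetz critical points to the higher side. The one slip is in your final sentence: after Lemma \ref{Push} the simplified fibration on $\tilde X$ is \emph{not} literally the blow-up of the original pencil $f$ --- it is the blow-up followed by a further handle modification --- so you cannot conclude directly from the definition that $f$ itself is a simplified pencil. What the paper does, and what your argument still needs, is to observe that the exceptional spheres appear as $(-1)$-framed $2$-handles linked once to the fiber and to nothing else, that the modification of Lemma \ref{Push} leaves their linkings and framings untouched, and that one can therefore symplectically blow them down \emph{after} the modification to produce a (possibly new) pencil on $X$ whose blow-up is the simplified fibration. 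This is a small but genuine bookkeeping step; with it included, your argument matches the paper's proof.
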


\begin{proof} If necessary, first replace $\omega$ by a cohomologous form $\omega'$ with connected vanishing locus. Theorem \ref{ADKtheorem} shows that there is a broken Lefschetz pencil compatible with this near-symplectic form, so it should have only one round handle singularity. Symplectically blow-up the base points to obtain a near-symplectic BLF on the blow-up $\tilde{X}$ of $X$. Apply the above lemma to get a simplified Lefschetz fibration on $\tilde{X}$, which also supports the near-symplectic structure since the fibers are unchanged and still symplectic under the modification described in the proof of Lemma \ref{Push}.

The exceptional spheres appear as $2$-handles linked to the higher genus fiber component, all with framing $-1$, and not linking to each other or to any other handle. The modification in Lemma \ref{Push} will not involve these handles; their linkings and framings will remain the same. Since they represent the exceptional spheres, we can symplectically blow them down to obtain a new Lefschetz pencil on $X$, with the desired properties.
\end{proof}

\noindent In fact we can slightly strengthen the choice of $\omega'$ in the proof and make it ``near-symplectically cobordant'' to the original form as in \cite{P3}.
% Here a near-symplectic cobordism between $\omega_0$ and $\omega_1$ is through closed $2$-forms $\omega_t$ such that each $x \in X$ we either have $(\omega_t)^2_{x} > 0$ or $(\omega_t)_{x} \equiv 0$ and its linearization at this point has rank $3$. 

It is no surprise that the monodromy representations of these fibrations are also simpler than usual. Here we include a brief digression on this topic. The reader unfamiliar with this topic can turn to \cite{GS} for the essentials. Let $Map_{\gamma}(F_g)$ be the subgroup of $Map(F_g)$ that consists of elements which fix the embedded curve $\gamma$, up to isotopy. Then there is a natural homomorphism $\phi_{\gamma}$ from $Map_{\gamma}(F_g)$ to $Map(F_{g-1})$ or to $Map(F_{g_1}) \times Map (F_{g_2})$ depending on whether $\gamma$ is nonseparating or separating $F_g$ into two closed oriented surfaces of genera $g_1$ and $g_2$. Define $S_g$ to be the set of pairs $(\mu, \gamma)$ such that $\mu \in Map_{\gamma}(F_g)$ and $\mu \in Ker \, (\phi_{\gamma})$. Recall that when the fiber genus is at least two, fiber-preserving gluing maps are determined uniquely up to isotopy. Hence, given any tuple $(\mu, \gamma) \in S= \bigcup_{g \geq 3} S_g$, we can construct a unique simplified BLF unless $\gamma$ is separating and there is a $g_i \leq 1$. Otherwise, one needs to include the data regarding the gluing of the low genus pieces carrying genus $0$ or genus $1$ fibrations.

If the fibers are connected, the map $\phi_{\gamma}: Map(F_g) \to Map(F_{g-1})$ above factors as 
\[ \psi_{\gamma}:  Map(F_g) \to Map(F_g \setminus N) \ \ \text{and} \ \ \ \varphi_{\gamma}: Map(F_g \setminus N) \to Map(F_{g-1}), \] 
where $N$ is an open tubular neighborhood of $\gamma$ away from the other vanishing cycles. (The middle group does not need to fix the boundaries.) The map $\psi$ has kernel isomorphic to $\Z$ ---the framing of the $2$-handle of a round $1$-handle. When we have a simplified BLF, the kernel of $\varphi$ is isomorphic to the braid group on $F_{g-1}$ with $2$-strands, by definition. This gives an idea about the cardinality of $S$, and in turn about the cardinality of the family of BLFs over $S^2$.

\begin{remark} \rm
Any closed oriented $4$-manifold whose intersection form is not negative-definite admits a simplified broken Lefschetz fibration after blow-ups. These split into two pieces: a symplectic Lefschetz fibration over a disk, and a near-symplectic broken fibration over a disk with only connected round singular locus. Let us restrict our attention to the latter piece, and assume for simplicity that we have connected fibers of genus $g$. Relative Seiberg-Witten-Floer, Heegaard-Floer, and Lagrangian matching invariants of the piece $D^2 \x F_g$ all take values in $H_*(Sym^n(F_g))\,$ for appropriate choices of the spin$^c$ (the degree of which on $F_g$ together with the genus $g$ determines $n$). The round $1$-handle attachment induces a map from these groups to the Floer homology groups of the fibered $3$-manifold separating the two pieces. Hence, these round handle attachments together with the monodromy of the higher genus side determines the computation of any one of these invariants, and for instance it can tell a lot about when the invariants vanish. We will address this problem elsewhere. 
\end{remark}

% \begin{remark} \rm
%If one has more than one round $2$-handles involved in a broken Lefschetz fibration, we may or may not be able to draw the Kirby diagrams as above. This is due to the fact that after each round cobordism, we obtain a new fiber, which does not need to simply `sit on the blackboard'. If one draws the diagram from the lower side; the $2$-handle of a round $1$-handle might link with the $1$- and $2$- handles of other round $1$-handles. To have a complete diagram, one would also need to pull the Lefschetz handles from the higher side to this diagram; but framings of both $2$-handles of round $2$-handles and those of the Lefschetz handles coming from the higher side all together are harder to determine.
%\end{remark}

\medskip
\section{Constructions of Broken Lefschetz fibrations} \label{ConstructionsofBLF}

We start with several examples of simplified broken Lefschetz fibrations. The examples are chosen to span various types of fibrations; with untwisted round locus, twisted round locus, connected fibers, disconnected fibers, and those which do not support any near-symplectic structure. The near-symplectic examples we present here are used later in our paper.

\begin{example} \label{example1} \rm 

The Figure \ref{f:example1} describes a near-symplectic BLF on $S^2 \x \Sigma_g \, \# S^1 \x S^3$, which is composed of a trivial $\Sigma_{g+1}$ fibration on the higher side, a trivial $\Sigma_g$ fibration on the lower side, and an untwisted round $1$-handle cobordism in between. We call this fibration the \emph{step fibration} for genus $g$. To identify the total space, first use the $0$-framed $2$-handle of the round $2$-handle to separate the $2$-handle corresponding to the fiber. Then eliminate the obvious canceling pair, and note that the remaining $1$-handle together with the $3$-handle of the round $2$-handle describes an $S^1 \x S^3$ summand. As the rest of the diagram gives $S^2 \x \Sigma_g$, we see that the total space is as claimed.

\begin{figure} [ht]
\begin{center}
\includegraphics{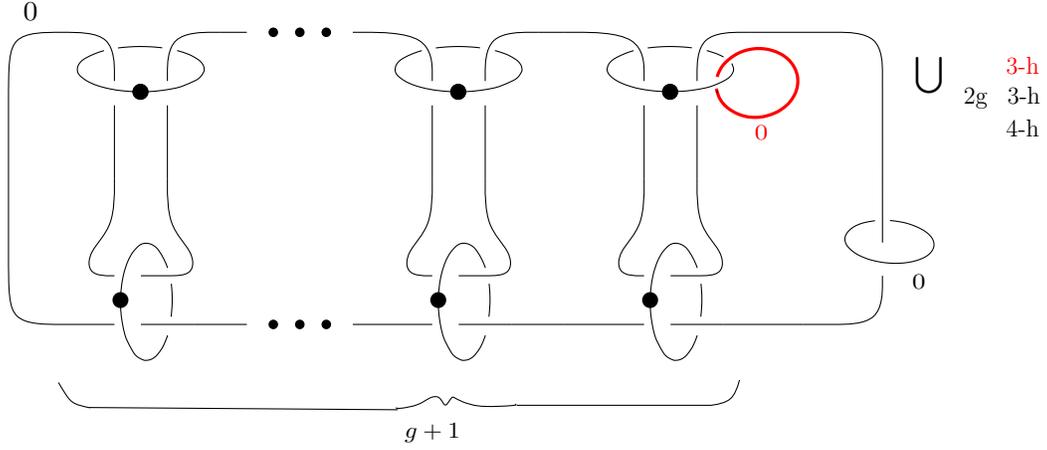}
\caption{\small The step fibration on $S^2 \x \Sigma_g \, \# S^1 \x S^3$.}
\label{f:example1}
\end{center}
\end{figure}

In several aspects, the round handle cobordism $W$ in the step fibration is the simplest possible cobordism. Here $W$ is the product of $S^1$ with a $3$-dimensional cobordism from $\Sigma_{g+1}$ to $\Sigma_g$ given by a $2$-handle attachment. We refer to these type of cobordisms as \emph{elementary cobordisms}. The round handle cobordisms in Example \ref{StandardBLFexample} are \textit{all} elementary.

\begin{figure}[ht]
\begin{center}
\includegraphics[scale=1.2]{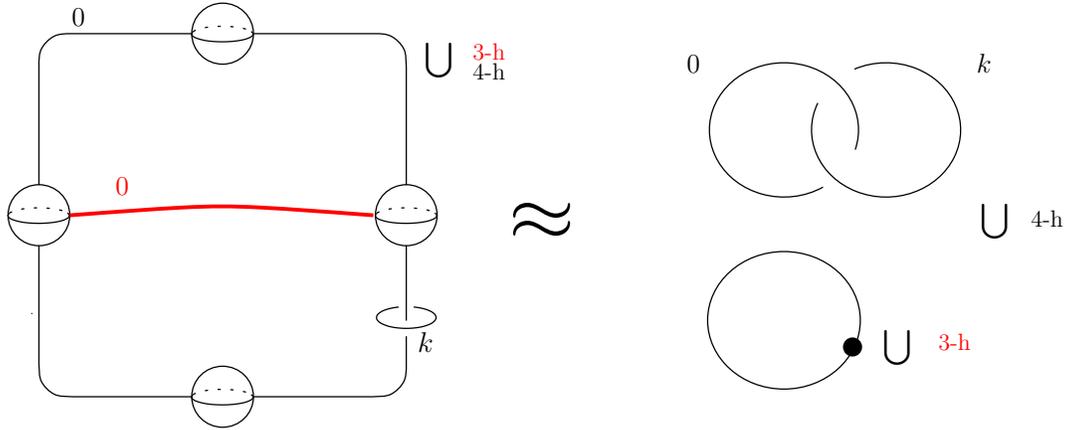}
\caption{\small A family of near-symplectic BLFs over $S^2$ (left), and the diagram after the handle slides and cancellations (right).}
\label{ADKexample}
\end{center}
\end{figure}

When $g=0$ we obtain a more general family as in Figure \ref{ADKexample}, where the section can now assume any self-intersection number $k$ depending on the identification of the lower side boundary. The fibrations we get are precisely the near-symplectic examples of \cite{ADK}. After simple handle slides and cancellations, one gets a diagram of the connected sum of an $S^2$ bundle over $S^2$ with Euler class $k$ and an $S^1 \x S^3$. Thus we get $S^2 \x S^2 \# S^1 \x S^3$ for even $k$ and $S^2 \widetilde{\times} S^2 \# S^1 \x S^3$ for odd $k$.
\end{example}

\begin{example} \label{example2} \rm

In Figure \ref{f:example2} we describe a family of simplified BLFs composed of an elliptic Lefschetz fibration with two critical points on the higher side, a trivial sphere fibration on the lower side, and a twisted round singularity in between. We claim that for even $k$ the total space is $S^2 \x S^2$ and for odd $k$ it is $\CP \# \CPb$. In order to verify this we prefer to use the diagram with dotted notation on the right of the Figure \ref{f:example2}. Let $H_2$ be the $2$-handle of the round $2$-handle, given in red and with fiber framing $0$. Using $H_2$, first unlink all the $2$-handles from the top $1$-handle, and cancel this $1$-handle against $H_2$. Then slide the $+1$-framed $2$-handle over the $-1$-framed $2$-handle to obtain the third diagram in the Figure \ref{example2moves}, and cancel the surviving $1$-handle against the ($-1$)-framed $2$-handle. Finally cancel the remaining unlinked $0$-framed $2$-handle against the $3$-handle. The result follows.

\begin{figure} [ht]
\begin{center}
\includegraphics{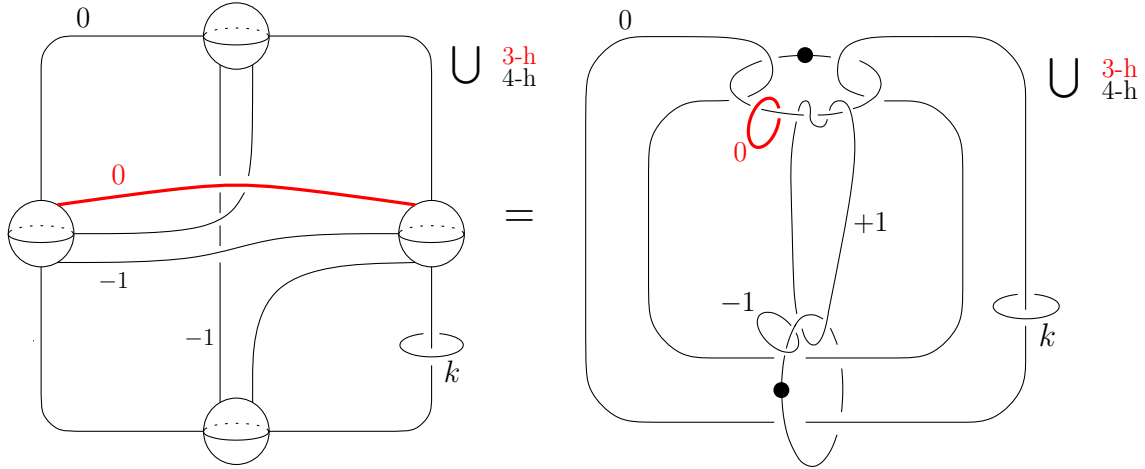}
\caption{\small A family of near-symplectic BLFs with twisted round cobordism. On the right: $1$-handles are replaced by dotted circles.}
\label{f:example2}
\end{center}
\end{figure}

\begin{figure}[ht]
\begin{center}
\includegraphics{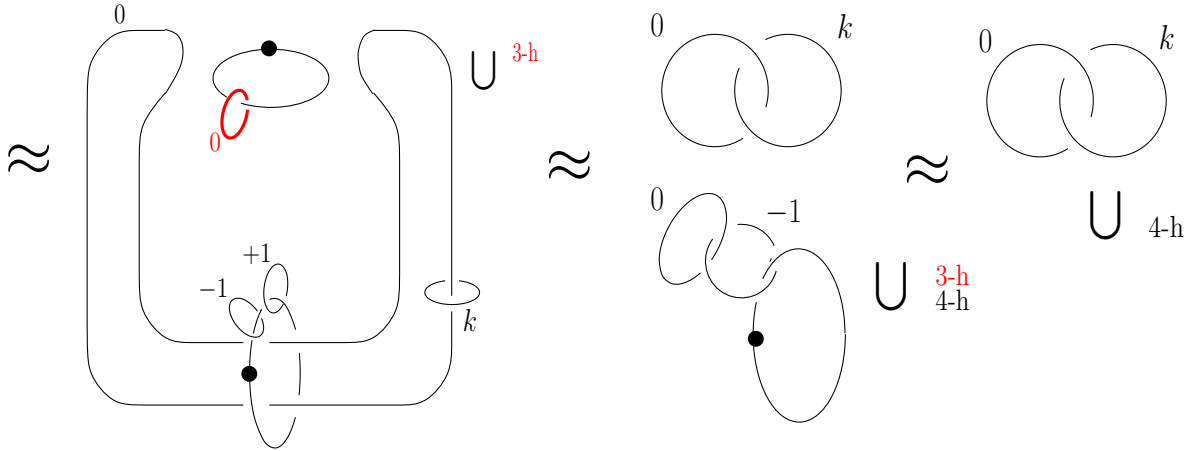}
\caption{\small Identifying the total space of the BLF in Figure \ref{f:example2}.}
\label{example2moves}
\end{center}
\end{figure}

For $k=0$ this is Perutz's Example in \cite{P1}. Moreover, when $k=-1$ the blow-down of this exceptional sphere yields a near-symplectic broken Lefschetz pencil on $\CP$.
\end{example}

\medskip
All the examples we discussed so far had nonseparating round $2$-handles; in other words, in all examples all the fibers were connected. However separating round $2$-handles arise quite naturally when studying broken fibrations on connected sums of $4$-manifolds, as illustrated in the next example.

\begin{example} \label{example3} \rm

Since $b^+( 2 \CP) = 2$, there exists a near-symplectic form on this non-symplectic \, $4$-manifold. We will construct a near-symplectic structure which restricts to a symplectic structure on each $\CP$ summand away from the connected sum region, through BLFs. Take the rational fibrations $f_i$, $i=1,2$ on two copies of $\CP \# \CPb$, with $(-1)$-sections. Consider a fibration $f = f_1 \cup f_2$ on the disjoint union of these two, by simply imagining them `on top of each other'. Now in a regular neighborhood of a fiber of $f$, introduce a round $1$-handle so to connect the disjoint sphere fibers. The result is a BLF $\hat{f}: 2\CP \# 2 \CPb \to S^2$ with two exceptional spheres. Let $h$ be the Poincar\'{e} dual of the sum of $(-1)$-sections. Then $h$ evaluates positively on each fiber component of this fibration, so there exists a near-symplectic structure compatible with $\hat{f}$ with respect to which the two $(-1)$-sections are symplectic. Blowing-down these two sections we obtain a near-symplectic BLF on $2 \CP$ with the proposed properties. A diagram of this fibration is given in Figure \ref{f:example3}.

\begin{figure}
\begin{center}
\includegraphics{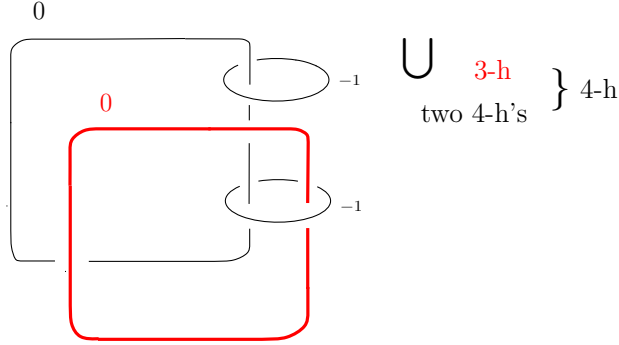}
\caption{\small A near-symplectic BLF on $2 \CP \# 2 \CPb$. The round $2$-handle separates the sphere fiber on the higher side into two spheres on the lower side. }
\label{f:example3}
\end{center}
\end{figure}

\begin{remark} (Broken fibrations on connected sums) \label{BLFonconnectedsum} \rm
The very same idea can be applied to connected sums of any two near-symplectic BLFs over the same base, say by connect summing in the higher genus sides. (This observation is due to Perutz \cite{P2}). Abstractly, for the diagrams of such fibrations over $S^2$, first slide a $2$-handle $F_1$ corresponding to a fiber component over the $2$-handle $F_2$ corresponding to the other fiber component. Then regard $F_2$ as the $2$-handle of a round $2$-handle, and add an extra $3$-handle to the union of two fibration diagrams. This way we obtain a connected sum model for our fibration diagrams.
\end{remark}

\end{example}

\medskip
Using similar techniques, we can also depict diagrams of BLFs which do not necessarily support near-symplectic structures. Next example as well as the family of examples discussed in the following subsection are of this sort:

\begin{example} \label{example4} \rm

As discussed in \cite{ADK}, a modification of $g=0$ case in Example \ref{example1}, yields a BLF on $S^4$. This can be realized by gluing the round cobordism $W$ to the higher side fibration over $D^2$ by twisting the fibration on $\partial_+ W = T^3$ by a loop of diffeomorphisms of the $T^2$ fiber corresponding to a unit translation in the direction transverse to the vanishing cycle $\gamma$ of the round $2$-handle \cite{ADK}. As a result of this, the $2$-handle corresponding to the $S^2$ fiber of the lower side is pulled to the blue curve in Figure \ref{BLFonS4}. The diagram then can be simplified as before: Use the $2$-handle of the round $2$-handle to separate the $2$-handle corresponding to the fiber, and then proceed with the obvious handle cancellations. Introducing a $(-1)$-framed unknot linked with the same $1$-handle that the $2$-handle of the round $2$-handle links in the diagram, we get an honest broken Lefschetz fibration on $\CPb$ (cf. \cite{GK}). 

\begin{figure}
\begin{center}
\includegraphics{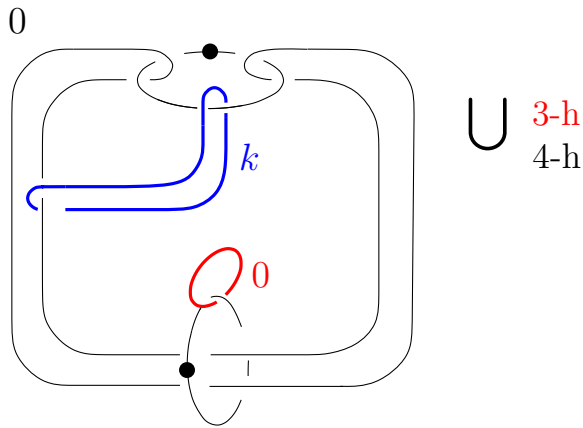}
\caption{\small A broken Lefschetz fibration on $S^4$.}
\label{BLFonS4}
\end{center}
\end{figure}

\medskip
\subsection{Achiral versus broken. } \label{achiralversusbroken} Recall that an \emph{achiral Lefschetz fibration} is defined in the same way a Lefschetz fibration is defined, except that the given charts around critical points are allowed to reverse orientation. An \emph{achiral Lefschetz pencil} is then defined by allowing orientation reversing charts around the base points as well. Critical points with the nonstandard orientation are called \emph{negative critical points}. 

\textit{There are $4$-manifolds which do not admit achiral fibrations or pencils, but admit broken fibrations.} The manifolds $\#_n \, S^1 \x S^3$ do not admit achiral Lefschetz fibrations or pencils when $n \geq 2$ \cite{GS}. Taking the product of the Hopf fibration $S^3 \to S^2$ with $S^1$, we get a fibration $S^1 \x S^3 \to S^2$ with inessential torus fibers. Then the connected sum model discussed in the previous example allows us to construct a broken fibration on any number of connected sums of $S^1 \x S^3$s. In Figure \ref{2S1xS3kirbydiagram} we give a diagram for the $n=2$ case.

\begin{figure}
\begin{center}
\includegraphics{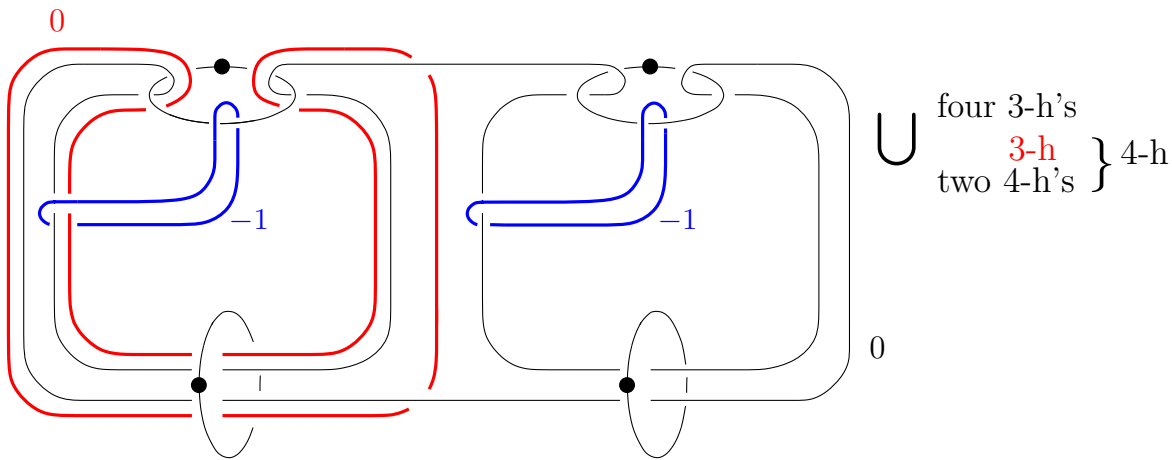}
\caption{\small A broken Lefschetz fibration on $S^1 \x S^3 \, \# \, S^1 \x S^3$.}
\label{2S1xS3kirbydiagram}
\end{center}
\end{figure}
\end{example}
There is a simple local modification around the image of an isolated negative Lefschetz critical point to obtain a new fibration on the blow-up of the $4$-manifold at this critical point, where the singularity is traded with a round singularity. It is equivalent to performing the local operation described at the very end of \cite{ADK} \textit{in an orientation reversing chart on the $4$-manifold}. \footnote{This trick was known to the author for some time, and its proof via handle diagrams given here was contained in his thesis work. We were later informed that Tim Perutz made the same observation in reference to a question of David Gay and Rob Kirby.}

Let $X$ be a compact orientable $4$-manifold, $S$ be a compact orientable surface and $f: X \to S$ be a broken achiral Lefschetz fibration. Assume that $x \in X$ is a negative Lefschetz critical point of $f$. For simplicity, we first assume that there is no other critical point on the fiber that $x$ lies in, and the corresponding vanishing cycle $\gamma$ is a nonseparating curve. Let $V$ be a small disk around $f(x)$ whose intersection with the image of the singular locus of $f$ consist of this point only. It suffices to study our modification in the local model in Figure \ref{NegativeLFCriticalPoint}. This is because there exists a self-diffeomorphism of the fiber which takes $\gamma$ to any nonseparating curve, and it can be extended to a fiber orientation preserving diffeomorphism $\phi$ from $\partial f^{-1}(V)$ to the boundary of the node neighborhood we have. After the modification we glue the new piece back via the diffeomorphism $\phi$ on the boundary which will remain the same throughout the modification. 

\begin{figure} [ht]
\begin{center}
\includegraphics{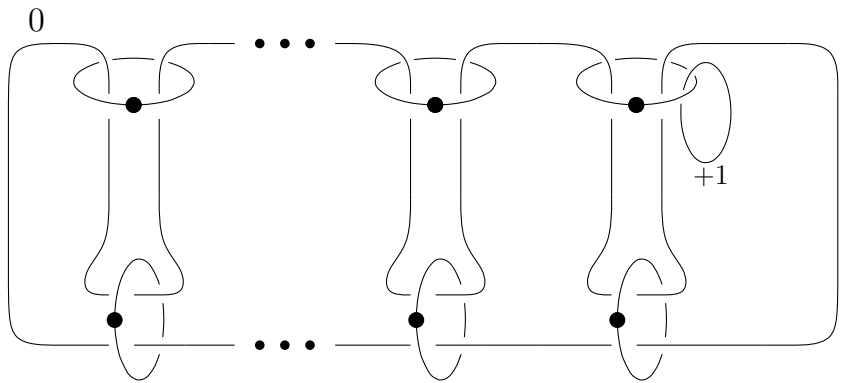}
\caption{\small Neighborhood of a negative nodal fiber with a nonseparating vanishing cycle.} \label{NegativeLFCriticalPoint}
\end{center}
\end{figure}

\begin{figure} 
\begin{center}
\includegraphics{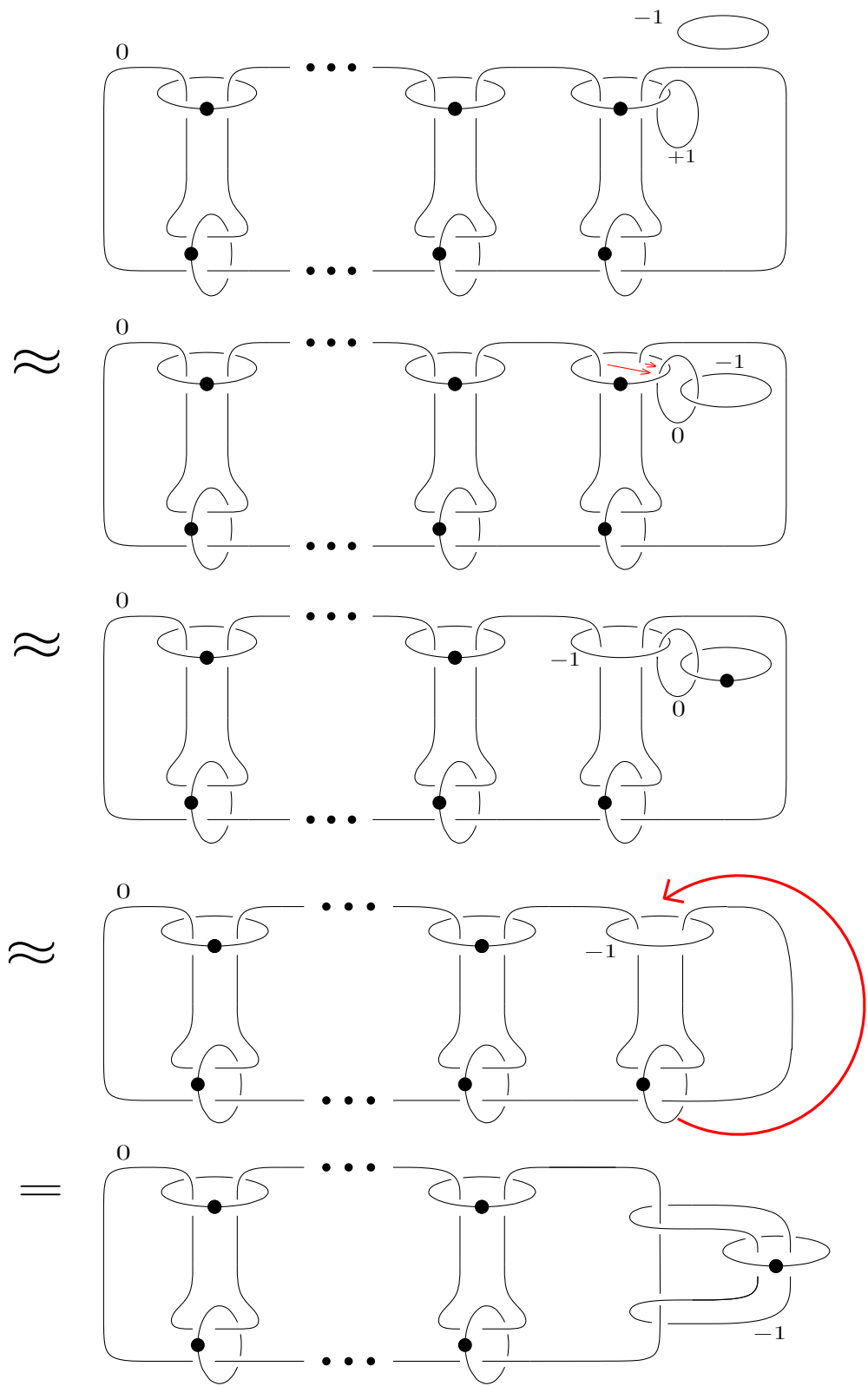}
\caption{\small Consecutive $2$-handle slides in the blown-up neighborhood of a negative node. In the last step, after an isotopy, we obtain a Kirby diagram of a round $1$-handle attachment to a product neighborhood of a fiber with one less genus. }
\label{LocalModification1}
\end{center}
\end{figure}

After blowing-up in this piece, one obtains a new diagram with no Lefschetz singularity but with a new round handle as shown in Figure \ref{LocalModification1}. We first slide the $(+1)$-framed $2$-handle over the $(-1)$-framed $2$-handle so that its framing becomes $0$. Then the two strands of the $0$-framed $2$-handle can be slid off the $1$-handle using the new $0$-framed $2$-handle, and now they go through the $(-1)$-framed $2$-handle as shown in the third diagram. The new $0$-framed $2$-handle and the $1$-handle becomes a canceling pair, which we remove from the diagram. The last step is just an isotopy which puts the diagram in the standard form of a trivial fibration with a fiber of one less genus, and a round $1$-handle attached to it. Observe that the framing of the $2$-handle of the round $1$-handle is $-1$, compensating for the loss of the singular fiber on the boundary monodromy. Lastly note that if there were other Lefschetz critical points on the same fiber then one would have additional $2$-handles for them in the local model, but this would not affect the modification.

%\begin{figure}%[ht!]
%\begin{center}
%\includegraphics{LocalModification2}
%\caption{\small After an isotopy, we obtain a Kirby diagram of a round $1$-handle attachment to a product neighborhood of a fiber with one less genus. }
%\label{LocalModification2}
%\end{center}
%\end{figure}

%==============================================================================================================
%==============================================================================================================
%==============================================================================================================

\medskip
\subsection{Broken fiber sum. }  \label{BrokenFiberSumSection}

We move on to presenting a generalization of the symplectic fiber sum operation \cite{MW, Go} to the near-symplectic case, which can also be set as a fibered operation.

Let $(X_i, f_i)$ be broken Lefschetz fibrations, and $F_i$ be chosen regular fibers of genus $g_i > 0 $, $i = 1, 2$. Choose regular neighborhoods $N_i=f_i^{-1}(D_i)$ of $F_i$, and without loss of generality, assume $g_1 - g_2 = k$ is a non-negative integer. Then we can obtain a new $4$-manifold $X= X_1 \setminus N_1 \cup W \cup X_2 \setminus N_2$, where $W$ is a composition of $k$ elementary round $2$-handle cobordisms. These cobordisms being elementary implies that the $2$-handles of the round $2$-handles can all be pushed onto a regular fiber $F_1$. The resulting manifold is uniquely determined by an unordered tuple of attaching circles $(\gamma_1, \cdots, \gamma_k)$ of the round $2$-handles involved in $W$, together with the gluing maps $\phi_1 : \partial X_1 \to \partial_+ W$ and $\phi_2 : \partial X_2 \to \partial_- W$ preserving the fibrations. (Recall that these gluings are unique up to isotopy when the fiber genus is at least two.) Hence we obtain a new broken Lefschetz fibration $(X,f)$ that extends the fibrations $(X_i \setminus N_ i, \, f_i|_{X_i \setminus N_i})$ by standard broken fibrations over the elementary cobordisms. We say $(X, f)$ is the \emph{broken fiber sum} of $(X_1, f_1)$ and $(X_2, f_2)$ along $F_1$ and $F_2$, determined by $\gamma_1, \cdots, \gamma_k$ and $\phi_1, \phi_2$.

\begin{theorem} \label{NSbrokensum}
If $(X_i, f_i)$ are near-symplectic broken Lefschetz fibrations, then $(X, f)$ is a near-symplectic broken Lefschetz fibration. Moreover, given arbitrarily small collar neighborhoods $\tilde{N}_i$ of $\partial (N_i)$ in $X_i$, we can choose $\omega$ so that $\omega |_{X_1 \setminus \tilde{N}_1} = \omega_1 |_{X_1 \setminus \tilde{N}_1}$ and $\omega |_{X_2 \setminus \tilde{N_2}} = c \, \omega_2 |_{X_2 \setminus \tilde{N_2}}$, where $c$ is some positive constant.
\end{theorem}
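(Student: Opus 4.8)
Since $(X,f)$ is already a broken Lefschetz fibration by construction, only the near-symplectic assertions need proof. The plan is to build the form on $X$ directly, by keeping the given forms away from the gluing region and interpolating across it, so that the ``moreover'' clause falls out of the localization of the construction. The substance is to produce a compatible near-symplectic form on the round cobordism $W$ matching prescribed fiber-symplectic boundary data, and for this I would invoke the round-handle local model of \cite{ADK}. As a conceptual check that $(X,f)$ can carry such a form at all, note that by the converse direction of Theorem \ref{ADKtheorem} it suffices to exhibit a class $h\in H^2(X;\R)$ with $h(\Sigma)>0$ on every fiber component $\Sigma$; I describe this class first, as it also pins down the scaling constant $c$.

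Every fiber component of $f$ lies in $X_1\setminus N_1$, in $X_2\setminus N_2$, or in $W$. Because $\omega_i$ is compatible with $f_i$, each fiber of $f_i$ is symplectic and $[\omega_i](\Sigma)>0$. The key homological point is that the fiber class is carried coherently through $W$: writing $W=W_1\cup\cdots\cup W_k$ with each $W_j=S^1\times V_j$ an elementary cobordism and $V_j$ a $3$-dimensional $2$-handle cobordism from $\Sigma_{g}$ to $\Sigma_{g-1}$, the two ends cobound $V_j$, so $[\partial_+V_j]=[\partial_-V_j]$ in $H_2(V_j)$ and hence the boundary fibers $\{pt\}\times\partial_\pm V_j$ are homologous in $W_j$. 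Concatenating, the genus $g_1$ fiber on the $X_1$ side and the genus $g_2$ fiber on the $X_2$ side represent the same class through $W$, and every intermediate fiber is homologous to them. Consequently a single closed $2$-form that restricts to $\omega_1$ on the $X_1$ side and to $c\,\omega_2$ on the $X_2$ side will evaluate positively on all fiber components once its two halves are made to agree on the separating fibered $3$-manifolds.

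I would now normalize $\omega_i$ near $F_i$ to a standard fibered form $\omega_{\mathrm{fib},i}+f_i^*(\alpha_i)$ on the collar $\tilde N_i$ (permissible within the deformation class of a form compatible with $f_i$ near a regular fiber), choose $c=\big(\int_{F_1}\omega_1\big)\big/\big(\int_{F_2}\omega_2\big)$ so that the fiber symplectic areas of $\omega_1$ and $c\,\omega_2$ coincide, and then glue: keep $\omega_1$ on $X_1\setminus\tilde N_1$, keep $c\,\omega_2$ on $X_2\setminus\tilde N_2$, and insert on $W$ the near-symplectic round-$2$-handle model of \cite{ADK}, arranged to restrict to the two normalized boundary forms. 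On the overlapping collars both forms are honestly symplectic and, after scaling, cohomologous, so a convex/Moser interpolation joins them while preserving $\omega^2>0$. The resulting $\omega$ is closed, agrees with $\omega_1$ off $\tilde N_1$ and with $c\,\omega_2$ off $\tilde N_2$, makes the regular fibers symplectic, and has $\omega^2\ge 0$ with degeneracy locus exactly the round circles of $f$ --- those already present in $X_1$ and $X_2$ together with the $k$ new even-type circles contributed by $W$ --- which is the claimed near-symplectic structure.

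The main obstacle is precisely the construction on $W$: one must produce a near-symplectic form on the round $2$-handle cobordism that interpolates between the two prescribed boundary forms after scaling by $c$, keeps $\omega^2\ge 0$ throughout, and degenerates exactly along the $k$ round circles with the correct rank-three transversality. This is the local analysis underpinning Theorem \ref{ADKtheorem}, and the elementary (product) structure $W_j=S^1\times V_j$, together with the fact that each round $2$-handle can be pushed onto a single fiber $F_1$, is what makes the model explicit and the area bookkeeping transparent. The remaining verifications --- that the interpolation on the symplectic collars preserves positivity, and that the glued form lies in the deformation class compatible with $f$ --- are routine given this model and the flexibility in the choice of compatible forms recorded after Theorem \ref{ADKtheorem}.
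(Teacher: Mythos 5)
Your overall strategy---keep $\omega_1$ and $c\,\omega_2$ away from the gluing region and fill in the round cobordism $W$---is the natural first thing to try, and your homological bookkeeping (the fibers on the two ends of each elementary cobordism $W_j=S^1\times V_j$ are homologous, so one class evaluates positively on all fiber components) is correct and relevant. But the step you yourself flag as ``the main obstacle'' is exactly the step that needs a proof, and deferring it to ``the local analysis underpinning Theorem \ref{ADKtheorem}'' does not close it. Theorem \ref{ADKtheorem} is a statement about closed manifolds: given a BLF on a closed $X$ satisfying the cohomological condition, it produces a deformation class of compatible near-symplectic forms by a global Thurston-type patching argument. It does not provide the \emph{relative} extension result you need, namely a near-symplectic form on the cobordism $W$ that restricts to two \emph{prescribed} fiber-symplectic forms on $\partial_\pm W$, keeps $\omega^2\ge 0$ throughout, and vanishes transversally exactly along the $k$ new round circles. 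That extension problem is delicate precisely because near-symplectic forms are rigid near their zero circles, and no citation in the paper supplies it off the shelf. As written, your proof has a genuine gap at its center.

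The paper's proof is engineered to avoid ever posing this relative problem. It builds a closed auxiliary manifold $\tilde{W}$ by iterated fiber sums of the step fibrations on $S^2\times\Sigma_g\,\#\,S^1\times S^3$ for $g=g_2,\dots,g_1$ (Example \ref{example1}); this closed BLF admits a section, hence satisfies the cohomological criterion, hence carries a compatible near-symplectic form by Theorem \ref{ADKtheorem} applied to a \emph{closed} total space. The broken fiber sum of $(X_1,f_1)$ and $(X_2,f_2)$ is then realized as two ordinary symplectic fiber sums: $X_1$ with $\tilde{W}$ along a lower-side fiber and $X_2$ with $\tilde{W}$ along a higher-side fiber, after rescaling one form. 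Since Gompf's symplectic fiber sum only modifies the forms in prescribed collar neighborhoods of the summing fibers, the ``moreover'' clause about $\omega$ agreeing with $\omega_1$ and $c\,\omega_2$ off $\tilde{N}_i$ comes for free. If you want to salvage your direct approach, you would need to either prove the relative extension statement on $W$ or, more economically, adopt this reduction to closed pieces and standard symplectic sums.
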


\begin{proof}
Let $k$ be as above. Take step fibrations on $S^2 \x \Sigma_g \, \# S^1 \x S^3$ described in Example \ref{example2} with $g = g_2, \, g_2 +1 , \ldots, g_2 + k = g_1$. Take the fiber sum $S^2 \x \Sigma_{g_2} \, \# S^1 \x S^3$ along a high genus fiber with $S^2 \x \Sigma_{g_2 +1} \, \# S^1 \x S^3$ along a low genus fiber. Then take the fiber sum of this new broken fibration along a high genus fiber with $S^2 \x \Sigma_{g_2+2} \, \# S^1 \x S^3$ along a low genus fiber, and so on, until $g = g_2 +k$. Denote this manifold by $\tilde{W}$. Since the BLF on $\tilde{W}$ admits a section, it can be equipped with a near-symplectic structure. Hence the broken fiber sum of $(X_1, f_1)$ and $(X_2, f_2)$ along $F_1$ and $F_2$ is obtained by fiber summing the former along $F_1$ with $\tilde{W}$ along a lower side fiber, and the latter along $F_2$ with $\tilde{W}$ along a higher side fiber. We can make these fiber sums symplectically, after possibly rescaling one of the near-symplectic forms $\omega_i$, $i=1,2$. When $k=0$ this would be the usual symplectic fiber sum.
\end{proof}

\begin{remark} \label{brokenfibersumdiagram} \rm
If $(X_i, f_i)$ for $i =1,2$ are Lefschetz fibrations over $S^2$, then one can depict the Kirby diagram of the broken fiber sum $(X, f)$ in terms of these two by using Lemma \ref{Push}. Since the round cobordism in the broken fiber sum consists of elementary cobordisms, all the $2$-handles of the round $2$-handles and the Lefschetz handles of the lower genus fibration can be drawn on the higher genus fiber directly.
\end{remark}

\begin{remark} \rm
Forgetting the fibration maps, we can describe the above construction for any two near-symplectic $(X_i, \omega_i)$, containing symplectically embedded surfaces $F_i$ with trivial normal bundles, where $i=1,2$. It is also possible to form a cobordism similar to $W$ when $F_1^2 = - F_2^2 \neq 0$ to handle the most general situation, but we won't have more to say about this here.
\end{remark}

Topological invariants of $X$ are easily determined. For example if $X_i$ are simply-connected and at least one of them admits a section, then using Seifert-Van Kampen theorem we conclude that $X$ is also simply-connected. The Euler characteristic and signature of $X$ can be expressed in terms of those of $X_1$ and $X_2$ as:
\begin{equation}
\eu(X) = \eu(X_1) + \eu(X_2) + 2 (g_1 + g_2) -4 \,\, , \,\,\,\,\,\,\, \sigma(X) = \sigma(X_1) + \sigma(X_2).
\end{equation}

\noindent where $g_i$ is the genus of $F_i$, for $i=1,2$. Therefore the holomorphic Euler characteristic $\chi_h(X) = \chi_h(X_1) + \chi_h(X_2) - 1 - (g_1 + g_2) / 2$. It follows that if $X_1$ and $X_2$ are almost complex manifolds, then $X$ obtained as their broken fiber sum along $F_1$ and $F_2$ is almost complex if and only if $k \equiv g_1 + g_2 \equiv 0 \pmod2$. Lastly note that the broken fiber sum operation might introduce second homology classes in $X$ that do not come from $X_i$ in addition to the usual Rim tori. This phenomenon occurs for instance when some $\gamma_i$ match with relative disks in $X_2 \setminus N_2$ to form an immersed sphere $S_i$. Then the torus $T_i$, which corresponds to a submanifold $\alpha_i \times S^1 \subset \partial (X_2 \setminus N_2) \cong F_2 \x S^1$, where $\alpha_i$ is the dual circle to $\gamma_i$ on $F_2$, intersects with $S_i$ at one point.

\begin{example} \rm
Take $X_1 = S^2 \x T^2 \# 4 \CPb$ with the Matsumoto fibration $f_1 : X_2 \to S^2$, and $X_2 = S^2 \x S^2$ with the trivial rational fibration  $f_2 : X_2 \to S^2$. The former is a genus two fibration and has the global monodromy: $(\beta_{1} \beta_{2} \beta_{3} \beta_{4})^{2} = 1$, where the curves $\beta_{1}$, $\beta_{2}$, $\beta_{3}$ and $\beta_{4}$ are as shown in Figure \ref{Matsumoto}.

\begin{figure}
\begin{center}
\includegraphics[scale=1.1]{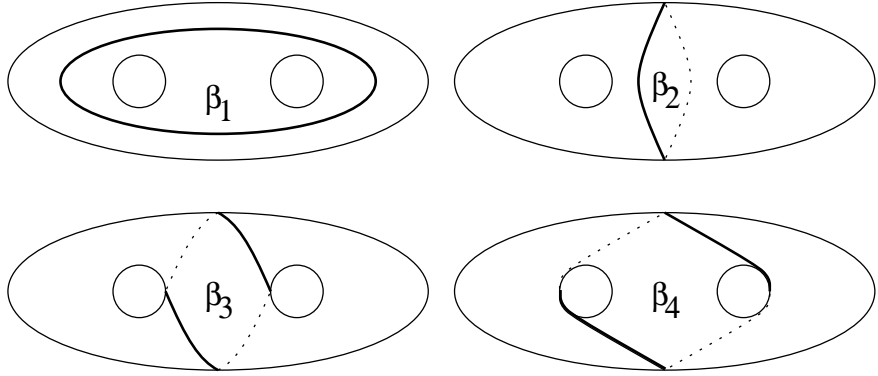}
\caption{\small Vanishing cycles in the Matsumoto fibration.}
\label{Matsumoto}
\end{center}
\end{figure}

If we denote the standard generators of the fundamental group of the regular fiber $\Sigma_2$ as $a_1, b_1, a_2, b_2$, then the curves $\beta_i$ are base point homotopic to:
\noindent $\beta_{1} = b_{1}b_{2}$, \
$\beta_{2} = a_{1}b_{1}{a_{1}}^{-1}{b_{1}}^{-1} = a_{2}b_{2}{a_{2}}^{-1}{b_{2}}^{-1}$, \
$\beta_{3} = b_{2}a_{2}{b_{2}}^{-1}a_{1}$, \
$\beta_{4} = b_{2}a_{2}a_{1}b_{1}$.

\noindent Hence $\pi_1(X_1) = \pi_1(\Sigma_2) \, / \, \langle \beta_{1},\beta_{2},\beta_{3},\beta_{4} \rangle$ is isomorphic to
\begin{eqnarray*}
\pi_1(X_1) &=& \langle a_1,b_1,a_2,b_2 \mid b_1 b_2 = [a_1,b_1]=[a_2,b_2]=b_2 a_2 b_2^{-1} a_1 = 1 \rangle.
\end{eqnarray*}

Now take the broken fiber sum of $(X_1, f_1)$ and $(X_2, f_2)$ along regular fibers $F_1$ and $F_2$, where $\gamma_1 = a_1$, $\gamma_2 = b_2$. The gluing map $\phi_1$ is unique, and we take $\phi_2$ as the identity. Thus we get a new $4$-manifold $X$ and a near-symplectic BLF $f: X \to S^2$ with two untwisted round singular circles.  Note that $\pi_1(X_1 \setminus N(F_1)) \cong \pi_1(X_1)$, and $\pi_1(X_2 \setminus N(F_2)) = 1$, since there are spheres orthogonal to each fiber $F_i$ in $X_i$. From Seifert-Van Kampen's theorem and from the choice of $\gamma_i$ in the broken sum, we see that
\begin{eqnarray*}
\pi_1(X) &=& \langle a_1,b_1,a_2,b_2 \mid b_1 b_2 = [a_1,b_1]=[a_2,b_2]=b_2 a_2 b_2^{-1} a_1= a_1 = b_2 = 1 \rangle.
\end{eqnarray*}
\noindent Thus $\pi_1(X) = 1$. On the other hand, $\eu(X) = \eu(X_1) + \eu(X_2) +  2 (g_1 + g_2) -4 = 8$, and $\sigma(X) = \sigma(X_1) + \sigma(X_2) = -4$. Hence, $X$ is homeomorphic to $\CP \# 5 \CPb$ by Freedman's Theorem. Moreover we obtain four distinct symplectic sections of self-intersection $-1$ in $(X, f)$ which arise from the internal connected sum of four parallel copies of the self-intersection zero section of $S^2 \x S^2 \cup W$ and the four $-1$-sections in the Matsumoto fibration in the broken fiber sum. Symplectically blowing-down these sections, we get a near-symplectic structure with two untwisted round circles on a homotopy $S^2 \x S^2$, together with a broken Lefschetz pencil supporting it. One can indeed verify that the total space is $S^2 \x S^2$ using the Remark \ref{brokenfibersumdiagram}.
\end{example}

What makes the broken fiber sum operation interesting is that, a priori, \textit{gluing formulae can be given for the invariants.} For if we compute the Seiberg-Witten-Floer invariants using the decomposition $X_1 \setminus N_1 \cup W \cup X_2 \setminus N_2$, the work in \cite{D} shows that on $W$ the maps between the relative Floer invariants is standard. That is, if $W$ consists of elementary cobordisms corresponding to $\gamma_j$ by $W_j$, and if Poincar\'{e}-Lefschetz duals of $\gamma_j$ on $F_1$ are $c_j$, then on $W$, this map is given by wedging with $c_j$ under the Piunikhin-Salamon-Schwarz isomorphism (defined for a given $\spinc$) between Floer homologies and singular homology. 

Although in many situations the broken fiber sum of near-symplectic $4$-manifolds can result in $4$-manifolds with vanishing \SW invariants, there are examples when it doesn't:

\begin{example} \label{nontrivialSW} \rm
Let $X_1= S^2 \x \Sigma_{g+1}$ and $X_2= S^2 \x \Sigma_g$ with projections $f_i$ on the $S^2$ components. The broken fiber sum $(X, f)$ of $(X_1, f_1)$ and $(X_2, f_2)$ along the fibers $\Sigma_{g+1}$ and $\Sigma_g$ is the same as $S^2 \x \Sigma_g \, \# S^1 \x S^3$ equipped with the step fibration. This has nontrivial \SW invariants (cf. \cite{OS:adj ineq}), calculated in the Taubes chamber of a compatible near-symplectic form. (Since both $S^2 \x {\text pt}$ and ${\text pt} \x \Sigma_2$ are symplectic with respect to these near-symplectic structures, the near-symplectic forms can be chosen so that they are homologous to the product symplectic form. Therefore \SW invariants are computed nontrivially in the \textit{same} chamber.)
\end{example}

A similar argument can be used to calculate \SW nontrivially, in general for the broken fiber sum of any symplectic Lefschetz fibration $(Y, f)$ of genus $g$ and $b^+(Y) > 1$ with the trivial fibration on $S^2 \x \Sigma_{g+1}$. The same type of handle calculus shows that the resulting manifold is $Y \# S^1 \x S^3$. Since $Y$ has nontrivial \SW, so does $Y \# S^1 \x S^3$ \cite{OS:adj ineq}. Moreover in \cite{OS:adj ineq}, the authors show that the dimension of the moduli space for such a nontrivial solution increases to one, thus $Y \# S^1 \x S^3$ is not of simple type. Having the simple type conjecture for \SW invariants of simply-connected $4$-manifolds with $b^+>1$ in mind, we therefore ask:

\noindent \textbf{Question 1:} Are there near-symplectic $4$-manifolds $X_1$ and $X_2$ with symplectically embedded surfaces $F_1 \hookrightarrow X_1$ and $F_2 \hookrightarrow X_2$ of different genera, such that their broken fiber sum along $F_1$ and $F_2$ results in a \textit{simply connected} $4$-manifold $X$ with nontrivial Seiberg-Witten invariant?

% ============================================================================================================
% ============================================================================================================
\medskip
\section{Near-symplectic $4$-manifolds with non-trivial Seiberg-Witten invariants} \label{applications}

We now turn our attention to near-symplectic $4$-manifolds whose \SW invariants are nontrivial. Let us refer to these as \emph{nontrivial near-symplectic 4-manifolds} for a shorthand, even though we do not claim that the \SW calculation makes use of the near-symplectic forms. \textit{When $b^+=1$ though we always assume that the \SW is computed in the chamber of the near-symplectic form}. The point of view we take is to regard this as an intermediate class which lies in between near-symplectic and symplectic classes. In this section we will investigate how far one can push certain results regarding symplectic $4$-manifolds and Lefschetz fibrations.

One might wonder if the class of nontrivial near-symplectic 4-manifolds is closed under the symplectic fiber sum operation, as it is the case for both near-symplectic and symplectic $4$-manifolds, respectively. Next theorem not only states that this is too much to hope but also points out how the choice of symplectic surfaces play a significant role here:

\begin{theorem} \label{trivialsum}
There are pairs of closed near-symplectic $4$-manifolds with nontrivial Seiberg-Witten invariants whose symplectic fiber sum along some fibers result in $4$-manifolds with vanishing Seiberg-Witten, whereas along some others they result in $4$-manifolds with nontrivial Seiberg-Witten.
\end{theorem}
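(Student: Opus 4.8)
The plan is to prove this by producing explicit examples where both behaviors occur, exploiting the gluing formula philosophy discussed just before the theorem together with the adjunction inequality \eqref{adjineq}. The key insight is that the vanishing or non-vanishing of $\SW$ under a symplectic fiber sum is governed by whether the summed fibers are homologically essential in a way that preserves the basic class structure, and this in turn is sensitive to the genus and homology class of the chosen fibers. I would take as building blocks the near-symplectic $4$-manifolds $X_1 = S^2 \x \Sigma_{g+1}$ and $X_2 = S^2 \x \Sigma_g$ from Example \ref{nontrivialSW}, both of which have nontrivial $\SW$ (their products of symplectic factors force this), and contrast two different choices of surfaces to sum along.

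First I would establish the \emph{non-vanishing} instance. Summing $X_1$ and $X_2$ along the \emph{fiber} surfaces $\Sigma_{g+1}$ and $\Sigma_g$ (the vertical factors) produces, by the handle calculus of Example \ref{nontrivialSW}, the manifold $S^2 \x \Sigma_g \, \# \, S^1 \x S^3$ carrying the step fibration, which has nontrivial $\SW$ computed in the Taubes chamber of a compatible near-symplectic form. The homological fact making this work is that both $S^2 \x \mathrm{pt}$ and $\mathrm{pt} \x \Sigma_g$ remain symplectic, so the near-symplectic form is homologous to a product symplectic form and the invariant survives. This half is essentially already in Example \ref{nontrivialSW}, so I would cite it and spend my effort on the contrasting case.

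Second, and this is where the real content lies, I would produce a \emph{vanishing} instance by summing along a \emph{different} choice of surfaces, where the resulting manifold violates the adjunction inequality for what would have to be its basic classes, or contains an essential low-genus or sphere-like surface of nonnegative self-intersection forcing $\SW \equiv 0$. The natural mechanism is to arrange the fiber sum so that the glued region introduces an embedded surface $\Sigma$ of genus satisfying $2g(\Sigma) - 2 < [\Sigma]^2 + |\beta \cdot [\Sigma]|$ for every candidate characteristic $\beta$, which by \eqref{adjineq} kills all basic classes. Concretely, choosing the summing surfaces so that the broken fiber sum creates an embedded sphere or torus of nonnegative square that meets the relevant homology nontrivially should do it; one can also arrange a connected-sum-like decomposition so the vanishing theorem for $\SW$ of manifolds with a positive-scalar-curvature or split structure applies. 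I expect the main obstacle to be verifying that the contrasting sum genuinely yields trivial $\SW$ rather than merely failing to be obviously nontrivial --- this requires either pinning down the diffeomorphism type of the summed manifold via the Kirby-calculus techniques of Section \ref{ConstructionsofBLF} (so that a known vanishing result applies) or exhibiting the adjunction-violating surface explicitly and tracking the chamber in the $b^+ = 1$ case. Managing the chamber dependence carefully, so that the near-symplectic chamber is the one in which the invariant is (non)trivial, will be the delicate bookkeeping step.
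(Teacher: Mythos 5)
Your proposal correctly identifies the general mechanism the paper uses for the vanishing half (an essential sphere of nonnegative square meeting the relevant homology, killed off via the adjunction inequality \eqref{adjineq} and the blow-up formula), but it has two genuine gaps. First, the pair of manifolds you propose does not fit the statement. The theorem is about the \emph{symplectic} fiber sum of a single fixed pair along different choices of fibers; your ``nonvanishing instance'' sums $S^2 \x \Sigma_{g+1}$ and $S^2 \x \Sigma_g$ along fibers of \emph{different} genera, which is the broken fiber sum of Section \ref{BrokenFiberSumSection}, not a symplectic fiber sum. Moreover, for that pair there is no evident choice of equal-genus fibers along which the sum has vanishing $\SW$ (summing along the sphere factors yields $S^2 \x \Sigma_{2g+1}$, again nontrivial), so you have not exhibited a pair realizing both behaviors. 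Second, the vanishing instance is never actually constructed: you say the right sphere or torus ``should'' appear for a suitable choice of summing surfaces and explicitly defer the verification, but producing that surface is precisely the content of the proof.

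The paper closes exactly this gap by a specific construction. It first manufactures the pair: $X_n \cong E(n)\,\#\,S^1 \x S^3$, obtained as the broken fiber sum of $E(n)$ along a regular torus fiber with $S^2 \x \Sigma_2$ along $\{pt\}\x\Sigma_2$ (with $\gamma$ a standard generator of $\Sigma_2$); these have nontrivial $\SW$ by the discussion after Example \ref{nontrivialSW}. The point of building $X_n$ this way is that $X_n \setminus N(\Sigma_2)$ contains a family of relative disks bounding curves in the gluing region. When one takes the symplectic fiber sum of $X_n$ and $X_m$ along the genus-two fibers, these disks match in pairs to give square-zero spheres $S_s$, and varying $s$ over the gluing circle produces a Lagrangian torus meeting each $S_s$ once, so $S_0$ is essential; blowing up at the intersection point and blowing down the proper transform of $S_0$ yields a square-one torus in a manifold with $b^+>1$, forcing $\SW_{X_{n,m}}\equiv 0$ by adjunction and the blow-up formula. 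Summing the same pair along the torus fibers instead gives $E(n+m)\,\#\,2\,S^1\x S^3$, which has nontrivial $\SW$. Note also that since $b^+>1$ throughout, the chamber bookkeeping you flag as delicate does not arise in the paper's examples.
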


\begin{proof}
As discussed in Example \ref{nontrivialSW} and the succeeding paragraph, if $Y$ has nontrivial \SW, then so does $Y \# \, S^1 \x S^3$. Take $E(n)$ (say with $n > 1$) with an elliptic fibration, and equip it with a symplectic form making the regular torus fiber $T$ symplectic. Also take $S^2 \x \Sigma_2$ with the product symplectic form. Look at the broken fiber sum of $E(n)$ with $n \geq 2$ along a regular torus fiber $T$ with $S^2 \x \Sigma_2$ along a genus two surface $\{ pt \} \x \Sigma_2$, where boundary gluings $\phi_1$ and $\phi_2$ are chosen to be identity, and $\gamma$ is chosen to be some fixed standard generator of $\Sigma_2$. The result is the near-symplectic $4$-manifold $X_n \cong E(n) \# \,S^1 \x S^3$, which has nontrivial \SW as noted in the paragraph succeeding the Example \ref{nontrivialSW}.

We can then take the symplectic fiber sum of such $X_n$ and $X_m$ along the higher side genus two fibers to get $X_{n,m}$. There are families of disks with their boundaries on $\partial (X_n \setminus N(\Sigma_2))$ and $\partial (X_m \setminus N(\Sigma_2))$, coming from the broken fiber sum construction in each piece. Matching pairs of these disks give spheres $S_s$ with zero self-intersection, where $s$ is parameterized by the base $S^1$ in the gluing region $S^1 \x \Sigma_2$ of the fiber sum. Denote the equator of $S_s$ sitting on the fiber sum region by $\gamma_s$, and consider a dual circle $\alpha_s$ on the same fiber. Varying $s$ along $S^1$ we obtain a Lagrangian torus $T$, which intersects each $S_s$ at one point. Thus $S_0$ is an essential sphere in $X_{n,m}$. Since $b^+(X_{n,m}) > 1$, the existence of such a sphere implies that $\SW_{X_{n,m}} \equiv 0$. An easy way to see it is as follows: Blow-up at the intersection point of $S_0$ and $T$, and then blow-down the proper transform of $S_0$. This way we get a torus $T'$ with $[T']^2 =1$ in a $4$-manifold with $b^+>1$. The adjunction inequality and the blow-up formula in turn implies that \SW of the original manifold $X_{n, m}$ should have been identically equal to zero. 

However, if one takes the fiber sum along lower genus fibers, the result is $E(n+m) \# \,2\, S^1 \x S^3$, which has nontrivial \SW.  
\end{proof}

\noindent We actually get an infinite family of examples obtained by varying $n, m > 1$ in the proof. The theorem demonstrates that the choice of the fibers in a near-symplectic fiber sum affects the outcome drastically. A natural question that follows is: \\
\noindent \textbf{Question 2:} If $(X_i, f_i)$ are nontrivial $4$-manifolds equipped with near-symplectic broken Lefschetz fibrations and $F_i$ are connected fibers with \textit{with minimal genus}, is the symplectic fiber sum of $X_1$ and $X_2$ along $F_1$ and $F_2$ always nontrivial?

It is known that Lefschetz fibrations over $S^2$ do not admit sections of nonnegative self-intersections, and the self-intersection can be zero only when the fibration is trivial ---i.e when it is the projection from $S^2 \x \Sigma_g$ onto the first component. (See for instance \cite{St}.) In the case of BLFs we see that:

\begin{theorem} \label{sections}
There are closed simply-connected $4$-manifolds which admit near-symplectic broken Lefschetz fibrations over $S^2$ with sections of any self-intersection. More precisely, for any integer $k$ and positive integer $n$, there is a near-symplectic $(X_{n,k} \, , f_{n, k})$ fibered over $S^2$, with a section of self-intersection $k$ and with $b^+(X_{n,k}) = n$. If $f: X \to S^2$ is a nontrivial broken Lefschetz fibration on a nontrivial near-symplectic $4$-manifold $X$, then any section has negative self-intersection if $b^+(X) >1$, but there are examples with sections of any self-intersection when $b^+(X) =1$. 
\end{theorem}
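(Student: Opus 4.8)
The statement bundles three essentially independent assertions: (i) a construction, for every integer $k$ and every $n\geq 1$, of a simply-connected near-symplectic BLF over $S^2$ carrying a section of self-intersection $k$ with $b^+=n$; (ii) a constraint, that on a nontrivial near-symplectic $4$-manifold with $b^+>1$ every section of a nontrivial BLF has negative self-intersection; and (iii) the failure of this constraint when $b^+=1$. The plan is to prove (ii) first, since it is the cleanest and dictates the shape of the examples, and then to exhibit examples for (iii) and (i).

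For (ii), a section of $f\colon X\to S^2$ is an embedded copy of the base, hence an embedded $2$-sphere $S\subset X$, and since $S$ meets each fiber $F$ transversally in one point we have $[S]\cdot[F]=1$, so $[S]\neq 0$ and $S$ is homologically essential. Suppose $[S]^2=k\geq 0$. Blowing up $k$ points of $S$ replaces it by an essential sphere $S'$ of self-intersection $0$ in $X\#k\,\CPb$, a manifold with the same $b^+>1$; by the blow-up formula it suffices to show its $\SW$ invariants vanish. Because $f$ is nontrivial it has a round locus across which the fiber genus jumps, so there is a regular fiber $\Sigma$ of positive genus $h=g(\Sigma)\geq 1$, with $[\Sigma]^2=0$ and $\Sigma\cdot S'=1$ (choose the blow-up points off $\Sigma$). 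Tubing $\Sigma$ to $r$ parallel copies of $S'$ produces, for each $r\geq 0$, an embedded genus-$h$ surface of square $2r$. If some $\beta$ were a basic class, the adjunction inequality \eqref{adjineq} would give $2h-2\geq 2r+|\beta\cdot[\cdot]|\geq 2r$ for all $r$, which is absurd; hence $\SW\equiv 0$, contradicting nontriviality, and therefore $k<0$. This is the same mechanism used in the proof of Theorem \ref{trivialsum}, where an essential square-$0$ sphere forced $\SW\equiv 0$.

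For (iii) and the case $n=1$, the $g=0$ family of Example \ref{example1} (Figure \ref{ADKexample}) already furnishes, for each integer $k$, a near-symplectic BLF over $S^2$ with $b^+=1$ and a section of self-intersection $k$, its total space being $S^2\x S^2\,\#\,S^1\x S^3$ for even $k$ and $S^2\widetilde{\x}S^2\,\#\,S^1\x S^3$ for odd $k$. As in Example \ref{nontrivialSW} and the discussion following it, these have nontrivial $\SW$ computed in the chamber of the compatible near-symplectic form, so they are nontrivial near-symplectic $4$-manifolds realizing sections of every self-intersection with $b^+=1$. This settles (iii) and shows the hypothesis $b^+>1$ in (ii) cannot be dropped.

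For (i) with general $n$, I would build $X_{n,k}$ by a broken fiber sum (Theorem \ref{NSbrokensum}), starting from a simply-connected symplectic Lefschetz fibration $Y$ over $S^2$ with $b^+=n$ and a section of (necessarily negative) self-intersection, and broken-fiber-summing $Y$ with the ADK block $M_s$ above along a regular fiber, the genus discrepancy being absorbed by a chain of elementary round $2$-handle cobordisms. The two knobs are independent: since self-intersections of sections add under fiber sum and the ADK family realizes every value of $s$, choosing $s$ appropriately makes the glued section have self-intersection exactly $k$, while $b^+$ is read off from additivity of the signature together with the Euler-characteristic formula of Subsection \ref{BrokenFiberSumSection}; near-symplecticity is automatic from Theorem \ref{NSbrokensum} because both pieces carry sections. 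The main obstacle will be arranging simple-connectivity simultaneously with $b^+=n$ and section self-intersection $k$: the $S^1\x S^3$ summand of $M_s$ contributes a $\Z$ to $\pi_1$, which must be killed by choosing the attaching circles $\gamma_1,\dots,\gamma_r$ of the round $2$-handles in the broken fiber sum to generate the offending loops, exactly as the curves $\gamma_1=a_1,\gamma_2=b_2$ were chosen in the Matsumoto broken-fiber-sum example to collapse $\pi_1$ via Seifert--Van Kampen. Verifying that this single choice trivializes $\pi_1$ while leaving $b^+$ and the section self-intersection at their intended values is the $\pi_1$- and homology-bookkeeping that carries the construction, and is where I expect to spend the most care.
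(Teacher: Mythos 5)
Your parts (ii) and (iii) are essentially sound, and for (ii) you are doing exactly what the paper intends: the paper dismisses the $b^+>1$ constraint with the single line ``employ the \SW\ adjunction inequality,'' and your blow-up-and-tube argument is the standard way to flesh that out (it is the same mechanism as in the proof of Theorem \ref{trivialsum}). Two caveats, though. First, your claim that a nontrivial BLF necessarily has a regular fiber of positive genus fails for fibrations with disconnected fibers --- e.g.\ the fibration on $2\CP \# 2\CPb$ in Example \ref{example3} has only sphere components --- so in that case your tubing step has nothing to tube with and you must fall back on Stipsicz's result or on the essential-sphere vanishing argument directly. Second, for (iii) the paper uses Example \ref{example2}, whose total spaces are $S^2\x S^2$ and $\CP\#\CPb$, rather than the $g=0$ family of Example \ref{example1}; your choice carries an $S^1\x S^3$ summand, so it is not simply connected and the nontriviality of its \SW\ in the near-symplectic chamber is less immediate (the discussion following Example \ref{nontrivialSW} that you invoke is stated for $b^+>1$). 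The paper's choice is also what makes part (i) easy.

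The genuine gap is in part (i), the ``more precise'' first assertion. The paper's construction is short: take $n$ copies of the Example \ref{example2} fibration, with the section of square $k$ in one copy, and apply the connected-sum-of-BLFs model of Example \ref{example3} and Remark \ref{BLFonconnectedsum}. Since $b^+$ is additive under connected sum the result has $b^+=n$; since each summand is simply connected so is the sum; and the chosen section survives with square $k$. Your broken-fiber-sum route is only a plan, and the obstacle you yourself flag is real: elementary round cobordisms in a broken fiber sum generically create $S^1\x S^3$ summands (the paper records that the broken fiber sum of $Y$ with $S^2\x\Sigma_{g+1}$ is $Y\#\,S^1\x S^3$), and whether the resulting $\Z$ in $\pi_1$ can be killed by the choice of the attaching circles $\gamma_i$ is precisely what must be proved --- the one worked example where it is killed (the Matsumoto sum) depends on a specific Seifert--Van Kampen computation. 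Moreover $b^+$ is not additive under fiber sum, so pinning it to exactly $n$ while simultaneously pinning the section's square to exactly $k$ and $\pi_1$ to $1$ is unverified bookkeeping. As written, the proposal does not establish the existence of the simply connected $(X_{n,k},f_{n,k})$.
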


\begin{proof}
In Example \ref{example2} we have constructed near-symplectic BLFs over $S^2$ which admit sections of any self-intersection $k$. As the total space of these fibrations are either $S^2 \x S^2$ or $\CP \# \CPb$, the \SW invariants are nontrivial. (Since the near-symplectic forms can be chosen so that they determine the same chamber as the usual symplectic forms, then \SW invariants are computed nontrivially in their chambers.) These provide examples for the very last part of the theorem. As described in the Example \ref{example3}, we can obtain a near-symplectic BLF on connected sums of these fibrations. Using $n$ such copies, we obtain a $4$-manifold with $b^+ =n$, which proves the first statement. For the remaining assertion, we simply employ the \SW adjunction inequality.
\end{proof}

There are various examples of nonsymplectic $4$-manifolds which have nontrivial \SW invariants. All these examples have $b^+ > 0$, which means that they admit near-symplectic broken Lefschetz pencils but not symplectic Lefschetz fibrations or pencils. This can be made explicit in Fintushel-Stern's knot surgered $E(n)$ examples \cite{FS: knots links, FS: same SW}. Below we obtain near-symplectic BLFs on an infinite family of pairwise nondiffeomorphic closed simply-connected smooth $4$-manifolds which can not be equipped with Lefschetz fibrations or pencils.

\begin{proposition} \label{BLFonknotsurgered}
For any positive integer $n$ and any knot $K$, $E(n)_K$ admits a near-symplectic broken Lefschetz fibration over $S^2$
\end{proposition}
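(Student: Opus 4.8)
The plan is to build the fibration directly from the Fintushel--Stern description of knot surgery, retaining the elliptic fibration on the unmodified part of $E(n)$ and filling in the surgered piece with a broken fibration manufactured from a Seifert surface of $K$. Recall that $E(n)_K$ is obtained by removing a tubular neighborhood $N(T) = T \times D^2$ of a regular torus fiber $T$ of the elliptic Lefschetz fibration $E(n) \to S^2$ and gluing in $(S^3 \setminus N(K)) \times S^1$ along the common boundary $T^3$, where (for the standard gluing of \cite{FS: knots links}) the longitude of $K$ and the $S^1$-factor are identified with the two circle directions of the elliptic fiber, and the meridian of $K$ with the normal direction of $T$. Writing $S^2 = D' \cup_{S^1} D$, where $D'$ is the base of the complementary Lefschetz fibration $E(n) \setminus N(T) \to D'$ (all Lefschetz and cusp fibers of $E(n)$ pushed into $D'$, so that the collar fibration over $\partial D'$ is the trivial $T^2$-bundle) and $D$ is a disk, it suffices to produce a broken fibration on the knot-surgery piece over $D$ whose restriction to $\partial D$ is the trivial $T^2$-bundle matching, through the gluing, the elliptic fiber over $\partial D'$.

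First I would construct this broken fibration on $(S^3 \setminus N(K)) \times S^1$ over $D$. Let $g \colon S^3 \setminus N(K) \to S^1$ be the circle-valued Morse function whose regular level sets are Seifert surfaces $F$ of $K$ (with $\partial F$ a longitude on $\partial N(K)$) and whose index-$1$ and index-$2$ critical points raise and lower the genus of $F$, exactly as in Example \ref{StandardBLFexample}; on $\partial (S^3 \setminus N(K))$ the map $g$ is the projection onto the meridian circle. The idea is to use the $S^1$-coordinate of $g$ as the angular coordinate of $D$ and the radial coordinate to cap off the longitudinal boundary of the level surfaces, so that the product of the critical points of $g$ with the $S^1$-factor becomes a collection of round singular circles, all untwisted (of even type, again by Example \ref{StandardBLFexample}), across which the fiber genus changes precisely as the genus of the Seifert level surface changes. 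The round $1$-handles (from index-$1$ critical points) raise genus and the round $2$-handles (from index-$2$ critical points) lower it, as in Section \ref{topology}. The generic interior fiber is then a closed surface $\hat F$ --- a Seifert surface capped off --- while the collar fiber near $\partial D$ is the genus-one elliptic torus $\lambda \times S^1$, with the round handles interpolating between the two.

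Gluing this broken fibration to $E(n) \setminus N(T) \to D'$ along the matched boundary $T^3$ yields a broken Lefschetz fibration $f \colon E(n)_K \to S^2$ whose singularities away from the round locus are exactly the Lefschetz and cusp singularities inherited from $E(n)$, and all of whose round circles are untwisted. To see that $f$ is near-symplectic in the sense of Theorem \ref{ADKtheorem}, I would exhibit a class $h \in H^2(E(n)_K)$ pairing positively with every fiber component: a section of the elliptic fibration of $E(n)$ meets $T$ once, its complement in $E(n) \setminus N(T)$ is a disk whose boundary is the normal circle of $T$, and this caps off against the $S^1$-family of the dual knot $c \subset M_K$ --- which is a section of $g$ --- to produce a section $S$ of $f$. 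Since all fibers are connected and homologous, $h = \mathrm{PD}[S]$ satisfies $h \cdot \Sigma > 0$ for every fiber $\Sigma$, and Theorem \ref{ADKtheorem} then upgrades $f$ to a near-symplectic broken Lefschetz fibration over $S^2$, as required. (As a consistency check, $E(n)_K$ is simply connected with $b^+ = 2n-1 > 0$, so it carries near-symplectic forms regardless; the content here is the explicit fibered model.)

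The main obstacle is the relative construction of the second paragraph: producing the broken fibration on $(S^3 \setminus N(K)) \times S^1$ over the disk $D$ with a \emph{prescribed} boundary fibration. One must simultaneously (i) turn the circle-valued Morse data of $g$ into round $1$- and $2$-handle singularities carried by circles, (ii) cap off the longitudinal boundary of the Seifert level surfaces so that the fibers become closed, and (iii) arrange the collar fibration over $\partial D$ to be precisely the standard $T^2$-bundle identified by the gluing $\phi$. All three are local and topological, but the delicate point is matching the framings and monodromy at the collar so that the genus-one torus at the boundary connects to the interior fibers through round handles alone, without forcing any spurious (achiral) Lefschetz singularity. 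The round-handle calculus of Section \ref{topology} together with the even-type local model of Example \ref{StandardBLFexample} are exactly the tools I expect to need to carry this out.
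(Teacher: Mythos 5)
Your overall strategy --- keep the vertical elliptic fibration on $E(n)\setminus N(T)$ over a disk $D'$ and fill in the surgered piece $(S^3\setminus N(K))\times S^1$ by a broken fibration over the complementary disk $D$ --- is not the route the paper takes, and the step you yourself flag as ``the main obstacle'' is a genuine gap rather than a technical loose end. Two concrete problems. First, the round singular circles you propose are $\{p\}\times S^1$ for $p$ a critical point of $g$, with the angular coordinate of $D$ given by $g$; since $g$ is constant on such a circle and the extra $S^1$ factor is confined to the fibers, each such circle maps into a single radial segment of $D$ and so cannot be immersed in the base, whereas the local model $(t,x_1,x_2,x_3)\mapsto(t,x_1^2+x_2^2-x_3^2)$ requires the fibration to restrict to an immersion of the round locus. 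To make $Crit(g)\times S^1$ into honest round circles the $S^1$ factor must be a \emph{base} direction, which forces the base to be $T^2$ as in Example \ref{StandardBLFexample}, not a disk. Second, the proposed generic fiber $\hat F$, a capped-off Seifert surface, does not exist inside $(S^3\setminus N(K))\times S^1$: the capping disk is the meridian disk of the surgery solid torus of $M_K$, which has been excised from this piece. (Relatedly, your gluing convention is reversed: in Fintushel--Stern knot surgery $T$ is identified with $S^1\times\mu_K$ and the normal circle of $T$ with the longitude $\lambda_K$, not the other way around; with your convention $[\lambda_K\times S^1]=0$ in $H_2$ of the surgered piece, so $[T]$ would die in $H_2$ of the result, which therefore could not be $E(n)_K$. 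With the correct convention the boundary fibration to be extended over $D$ has base the $\lambda_K$-direction and fiber $\mu_K\times S^1$, which is transverse to the one supplied by Example \ref{StandardBLFexample}.)

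The paper resolves exactly these difficulties by a different decomposition. It does \emph{not} use the vertical elliptic fibration: it realizes $E(n)$ as a double branched cover of $S^2\times S^2$ carrying the locally holomorphic \emph{horizontal} fibration, for which the surgery torus is a square-zero \emph{bi-section} and there are four multiple fibers of multiplicity two. On the other side it keeps the broken fibration $S^1\times M_K\to T^2$ of Example \ref{StandardBLFexample} (so the round circles genuinely immerse into the base) and composes with a degree-two branched cover $T^2\to S^2$ whose branch points avoid the round images, turning the torus section $S^1\times m$ into a square-zero bi-section and again producing four multiple fibers of multiplicity two. Knot surgery is then performed as a symplectic fiber sum along the two bi-sections, carried out fiberwise over the common base $S^2$ with the multiple fibers matched, and the resulting locally holomorphic broken fibration is perturbed to be Lefschetz as in the Fintushel--Stern argument. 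If you wish to salvage your approach you would have to construct the fibration on the surgered piece over $D$ from scratch; the branched-cover/bi-section mechanism is precisely the device that converts the natural $T^2$-base picture into an $S^2$-base one, and no substitute for it appears in your sketch.
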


\begin{proof}
Think of $E(n)$ as the branched double cover of $S^2 \times S^2$ with branch set composed of four disjoint parallel copies of $S^2 \times \{pt\}$ and $2n$ disjoint parallel copies of $\{pt\} \times S^2$, equipped with the locally holomorphic `horizontal fibration' \cite{FS: same SW}. The regular torus fiber $F$ of the usual vertical fibration is a bi-section with respect to this fibration. We have exactly four singular fibers each with multiplicity two. On the other hand, if $M_K$ is obtained by a $0$-surgery on a nonfibered knot $K$ in $S^3$, then there is a broken fibration (no Lefschetz singularities) from $S^1 \times M_K$ to $T^2$ as discussed in Example \ref{StandardBLFexample}. One can compose this map with a degree two branched covering map from the base $T^2$ to $S^2$, such that the branching points are not on the images of the round handle singularities. What we get is a broken fibration with four multiple fibers of multiplicity two, which are obtained from collapsing two components from all directions. An original torus section $T$ of $S^1 \x M_K \to T^2$ is now a bi-section of this fibration, intersecting each fiber \textit{component} at one point. Both $F$ and $T$ have self-intersection zero, and thus we can take the symplectic fiber sum of $E(n)$ and $S^1 \x M_K$ along them to get $E(n)_K$. The multiplicity two singular fibers can be matched so to have a locally holomorphic broken fibration with four singular fibers of multiplicity two. This fibration can be perturbed to be Lefschetz as argued in \cite{FS: same SW}. When $K$ is fibered, we obtain genuine Lefschetz fibrations.
\end{proof}

\end{document}